\newtheorem{theorem}{Theorem}
\newtheorem{lemma}{Lemma}
\theoremstyle{definition}
\newtheorem{remark}{Remark}
\newcommand{\R}{\mathbb R}
\newcommand{\op}[1]{\operatorname{{#1}}}
\newcommand{\mc}{\mathcal}
\newcommand{\dis}{\displaystyle}
\newcommand{\eps}{\varepsilon}
\title{Differential Harnack Estimates For Fisher's Equation}
\author{Xiaodong Cao}
\address{Department of Mathematics,
 Cornell University, Ithaca, NY 14853-4201}
\email{cao@math.cornell.edu}
\author{Bowei Liu}
\address{Department of Mathematics, %Building 380,
 Stanford University, CA 94305}
\email{bowei@math.stanford.edu}
\author{Ian Pendleton}
\address{Department of Mathematics,
 Cornell University, Ithaca, NY 14853-4201}
\email{iap26@cornell.edu}
\author{Abigail Ward}
\address{Department of Mathematics,
University of Chicago,
Chicago, IL 60637}
\email{abigailward@uchicago.edu}
\date{\today}
\begin{document}

\begin{abstract}
In this paper, we derive several differential Harnack estimates (also known as Li-Yau-Hamilton-type estimates) for positive solutions of Fisher's equation. We use the estimates to obtain lower bounds on the speed of traveling wave solutions and to construct classical Harnack inequalities. \end{abstract}

\maketitle

\section{Introduction}

Fisher's equation, or the Fisher-KPP partial differential equation, is given by
\vspace{12pt}
\begin{equation}
\label{eq:fisher}
f_t = \Delta f + c f ( 1 - f),
\end{equation}

where $f$ is a real-valued function on an n-dimensional Riemannian manifold $M^n$, and $c$ is a positive constant. 
The equation was proposed by R. A. Fisher in 1937 to describe the propagation of an evolutionarily advantageous gene in a population \cite{fisher37}, and was also independently described in a seminal paper by A. N. Kolmogorov, I. G. Petrovskii, and N. S. Piskunov in the same year \cite{kpp37}; for this reason, it is often referred to in the literature as the Fisher-KPP equation.
The density of the gene evolves according to diffusion (the term $\Delta f$) and reaction (the term $cf(1-f)$).
Since the two papers in 1937, the equation has found many applications including in the description of the branching Brownian motion process \cite{mckean75}, in neuropsychology \cite{tuckwell88}, and in describing certain chemical reactions \cite{naraigh13}.
Because a solution $f$ often describes a concentration or density, it is natural to study solutions to the equation for which $0 < f < 1$; our main theorems will simply assume positive solutions.

It is clear that $f=0$ and $f=1$ are stationary solutions to this equation on any manifold; it is also known that when $M^n=\R^n$ the equation admits traveling wave solutions, i.e. solutions $f(x,t) $ that we can express as a function of $z=x+\eta t$ for some vector $\eta \in \R^n.$ Under a broad range of conditions, general solutions to the equation in $\R^1$ approach a traveling wave solution with a unique minimal speed (see for example, \cite[Theorem 17]{kpp37} or \cite{fisher37, sherratt98}). A bound on the minimum speed of such a traveling wave solution on $\R^1$ was known to Kolmogorov-Petrovskii-Piskunov \cite{kpp37}; our work results in bounds for the minimum speed of a solution on $\R^n$ for $n=1,2,3$. While our bound in dimension $1$ is weaker than the previously known bounds, the bounds in higher dimensions are new and suggest that the study of Harnack inequalities may be used to bound the minimal speed of traveling waves in higher dimensions as well.
\vspace{12pt}

Our work introduces and proves three Li-Yau-Hamilton-type Harnack inequalities which constrain positive functions satisfying the Fisher-KPP equation on an arbitrary Riemannian manifold $M^n$. Depending on the setting we obtain different inequalities. The study of differential Harnack inequalities was first initiated by P. Li and S.-T. Yau in \cite{ly86} (also see \cite{ab79}). Harnack inequalities have since played an important role in the study of geometric analysis and geometric flows (for example, see \cite{hamilton93,  perelman1}). Applications have also been found to the study of nonlinear parabolic equations, e.g. in \cite{hamilton11}. One of these is a recent reproof of the classical result of H. Fujita \cite{fujita66}, which states that any positive solution to the Endangered Species Equation in dimension $n$,
\[f_t=\Delta f + f^p,\]
blows up in finite time provided $0<n(p-1)<2$; see \cite{cck14}.

When the dimension falls into a certain range we can integrate our differential Harnack inequality along any space-time curve to obtain a classical Harnack inequality which allows us to compare the values of positive solutions at any two points in space-time when time is large.

The organization for the paper is as follows: In Section 2 we present the precise formulations and the proofs of our two inequalities governing closed manifolds. In Section 3 we state and prove a similar Harnack inequality for complete noncompact manifolds. In Section 4, we end the paper with the aforementioned results on the minimum speed of traveling wave solutions and classical Harnack inequalities.
\vspace{12pt}

\paragraph{\textbf{Acknowledgements:}} X. Cao's research was
partially supported by NSF grant DMS 0904432. B. Liu, I. Pendleton, and A. Ward's research was supported by NSF grant DMS 1156350 through the Research Experience for Undergraduates Program at Cornell University during the summer of 2012. The authors would like to thank Professor Robert Strichartz for his encouragement and Benjamin Fayyazuddin-Ljungberg and Hung Tran for helpful discussions. They also thank the referee for many detailed suggestions which improved the quality of this paper.

\section{On Closed Manifolds}

In this section, we will deal with the case when the Riemannian manifold $M$ is closed, and we also assume that its Ricci curvature is non-negative. 

In what follows, the time derivative will always be taken to mean the derivative from the left, if the two-sided derivative does not exist.

    \subsection{Statement of Theorem} We first state our main theorem of this section. 
    
    \begin{theorem}
	\label{theorem:black}
    		Let  $(M^n,g)$ be an $n$-dimensional closed Riemannian manifold with non-negative Ricci curvature and $f(x,t) : M \times [0, \infty) \to \R$ be a positive solution of the Fisher-KPP equation $f_t = \Delta f + c f (1 - f)$, where $f$ is $C^2$ in $x$ and $C^1$ in $t$, and $c > 0$.
			
			Let $u= \log f$, then we have
			\begin{equation}
    			\Delta u + \alpha |\nabla u|^2 + \beta e^u + \phi_0(t) \geq 0
			\end{equation}
			for all $x$ and $t$, provided that
				\begin{itemize}
					\item[(i)] $0 < \alpha < 1$,
					\item[(ii)]$ \beta \leq \dfrac{-c n (1 + \alpha)}{4 \alpha ^2 - 4 \alpha + 2 n} < 0$,
					\item[(iii)] $ \dfrac{  8\beta ( 1 -\alpha)} n + c < 0$,
			\end{itemize}
			where
			\begin{equation*}
			\phi_0(t) = \frac{\left( \frac{\beta c n}{c n + 8 \beta (1 - \alpha)} \right) e^{-c t} - \beta}{1 - e^{-c t}}.
			\end{equation*}

  If instead of (iii) we have that
\begin{itemize}
\item[(iv)] $\dfrac{ 8 \beta  (1 - \alpha)} n + c \geq 0, $
\end{itemize}
then the following holds:
\begin{equation}
\Delta u + \alpha |\nabla u|^2 + \beta e^u + \phi_0(t) \geq 0,
\end{equation}
where
\begin{equation*}
\phi_0(t) = \begin{cases} \dfrac n { 2  (1- \alpha) t }, & t \leq T_2 := \dfrac n { 2 ( 1 - \alpha) ( - \beta c ) } \left ( \dfrac{ 4 \beta ( 1- \alpha)}n + c \right), \\ \dfrac{ - \beta c ( e^{c (t-T_2)} + 1 )}{ c+ \frac{ 8 \beta ( 1- \alpha)} n + c e^{ c (t-T_2)} }, & \text{otherwise}. \end{cases}  
\end{equation*}
		\end{theorem}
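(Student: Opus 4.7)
The plan is to apply the parabolic maximum principle to the quantity
\[ F := \Delta u + \alpha |\nabla u|^2 + \beta e^u + \phi(t), \]
where $\phi$ is chosen so that $F \geq 0$ on $M \times (0,\infty)$. First I would rewrite Fisher's equation in terms of $u = \log f$, using $\Delta f / f = \Delta u + |\nabla u|^2$, to obtain
\[ u_t = \Delta u + |\nabla u|^2 + c - c e^u. \]
Differentiating each summand of $F$ in $t$, commuting $\partial_t$ with $\Delta$, and subtracting $\Delta F$ then cancels the fourth-order term $\Delta^2 u$ and leaves $(1-\alpha)\Delta|\nabla u|^2$ together with gradient couplings $\langle \nabla u, \nabla \Delta u\rangle$ and $\langle \nabla u, \nabla|\nabla u|^2\rangle$, plus reaction contributions of the form $e^u$, $e^{2u}$, $e^u \Delta u$, $e^u|\nabla u|^2$ produced by the nonlinearity $cf(1-f)$; a pleasant cancellation kills the $e^{2u}\Delta u$ and the quadratic $e^{2u}|\nabla u|^2$ terms so that only a single $e^u(1-e^u)$ survives as the purely reactive piece.

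Next I would invoke Bochner's formula with non-negative Ricci curvature, giving $\Delta|\nabla u|^2 \geq 2|\nabla^2 u|^2 + 2\langle \nabla u, \nabla\Delta u\rangle$, followed by the trace estimate $|\nabla^2 u|^2 \geq (\Delta u)^2/n$. Collecting the surviving first-order terms as $2\langle\nabla u,\nabla F\rangle - 2\beta e^u|\nabla u|^2$ (using that $\phi$ is spatially constant) reduces the evolution to the schematic inequality
\[
(\partial_t - \Delta)F \,\geq\, 2\langle\nabla u,\nabla F\rangle + \frac{2(1-\alpha)}{n}(\Delta u)^2 - c e^u \Delta u - \bigl(c(1+2\alpha)+2\beta\bigr) e^u|\nabla u|^2 + \beta c e^u(1-e^u) + \phi'(t).
\]

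Now I would apply the parabolic maximum principle. By construction $\phi_0(t)\to+\infty$ as $t\to 0^+$, so $F$ is positive for small $t>0$; if $F$ ever becomes non-positive, let $(x_0,t_0)$ be an interior point where the minimum is attained. There $\nabla F = 0$, $\Delta F \geq 0$, and $F_t\leq 0$, so the right-hand side above is forced to be $\leq 0$. Substituting $\Delta u = F(x_0,t_0) - \alpha|\nabla u|^2 - \beta e^u - \phi$ with $F(x_0,t_0)\leq 0$, and writing $X = |\nabla u|^2 \geq 0$ and $Y = e^u > 0$, converts this into a polynomial constraint $P(X,Y) + \phi'(t) \leq 0$. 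Choosing $\phi$ so that $\min_{X,Y\geq 0} P(X,Y) \geq -\phi'(t)$ contradicts this and forces $F \geq 0$; solving the resulting first-order ODE with the singular condition $\phi(0^+) = +\infty$ produces precisely $\phi_0(t)$.

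The delicate step will be the polynomial optimization. Condition (i) makes the $X^2$-coefficient $\tfrac{2\alpha^2(1-\alpha)}{n}$ positive, so $P$ is strictly convex in $X$, and condition (ii) is a discriminant-type bound on $\beta$ ensuring $P$ stays bounded below in the $Y$ direction. The sign of $c + 8\beta(1-\alpha)/n$ then decides where the minimum of $P$ lives: under (iii) it is attained on the boundary $\{X=0\}$, reducing the constraint to a Riccati equation in $\phi$ whose blow-up solution integrates to the first, exponential-in-$t$ formula for $\phi_0$. Under (iv) the minimizer first sits on an interior critical locus and the constraint collapses to the Li--Yau-type ODE $\phi' = -\tfrac{2(1-\alpha)}{n}\phi^2$, whose singular solution $\tfrac{n}{2(1-\alpha)t}$ is valid until $\phi$ drops to the threshold at $t = T_2$; after that the minimizer returns to $\{X=0\}$ and the Riccati equation takes over with initial data matching $\phi(T_2)$, producing the second branch. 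Verifying the $C^0$-matching at $T_2$ and that this piecewise $\phi_0$ globally certifies $P + \phi' \geq 0$ is the most intricate calculation, and the choice of the constants in (ii)--(iv) is what tunes the two regimes to fit together.
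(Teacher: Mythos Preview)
Your overall architecture is correct and matches the paper: compute $(\partial_t - \Delta)F - 2\langle\nabla u,\nabla F\rangle$ via Bochner and $|\nabla^2 u|^2 \geq (\Delta u)^2/n$, apply the maximum principle at the first zero, substitute $\Delta u = -\alpha|\nabla u|^2 - \beta e^u - \phi$ there, and reduce to a polynomial constraint in $X = |\nabla u|^2$ and $Y = e^u$. Your schematic evolution inequality agrees with the paper's intermediate computation.

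Where your description goes wrong is the optimization, and this matters because it is exactly where conditions (ii)--(iv) enter. The $Y^2$-coefficient $\tfrac{2\beta^2(1-\alpha)}{n}$ is already positive, so $P$ is automatically bounded below in $Y$; condition (ii) has nothing to do with that. What (ii) actually does is force the $XY$ cross-term coefficient $\tfrac{4\alpha\beta(1-\alpha)}{n} - 2\beta - (1+\alpha)c$ to be non-negative. Together with the already-positive $X^2$- and $\phi X$-coefficients, this makes the entire $X$-dependent part of $P$ non-negative, so the infimum over $X \geq 0$ is \emph{always} at $X = 0$, in case (iii) and in case (iv) alike. The paper isolates this piece separately as $P_2 \geq 0$.

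The (iii)/(iv) dichotomy therefore lives entirely in the remaining $Y$-quadratic
\[
P_3(Y) \;=\; \tfrac{2\beta^2(1-\alpha)}{n}\,Y^2 + \Bigl[(\tfrac{4\beta(1-\alpha)}{n} + c)\phi + \beta c\Bigr]Y + \tfrac{2(1-\alpha)}{n}\phi^2 + \phi'.
\]
Under (iv) one has $\tfrac{4\beta(1-\alpha)}{n} + c \geq c/2 > 0$, so for large $\phi$ the linear coefficient is non-negative and the infimum over $Y > 0$ is at $Y \to 0^+$: \emph{that} is what collapses the constraint to the Li--Yau ODE for $t \leq T_2$. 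When $\phi$ drops below the threshold $-\beta c/(\tfrac{4\beta(1-\alpha)}{n}+c)$ --- or under (iii), where the Riccati route is needed from the start --- one instead completes the square in $Y$ and obtains the Riccati inequality $-(\mu_1+\nu_1\phi)^2 + \omega_1^2\phi^2 + \phi' > 0$. So the ``interior versus boundary'' switch you describe is in the $Y$-variable, not in $X$; your claim that the minimizer ``returns to $\{X=0\}$'' at $T_2$ is mistaken since it never left. If you reorganize the optimization this way, your plan goes through and coincides with the paper's proof. (Minor slip: the cancellation producing $\beta c e^u(1-e^u)$ is of $e^u\Delta u$ and $e^u|\nabla u|^2$ in $(\partial_t - \Delta)(\beta e^u)$, not $e^{2u}$-terms.)
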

	
\begin{remark}
In summary, our theorem is that $\Delta u + \alpha |\nabla u|^2 + \beta e^u + \phi_0(t) \geq 0$, where
	\begin{equation*}
	\phi_0(t) = 
	\begin{cases}
	\dfrac{\left( \frac{\beta c n}{c n + 8 \beta (1 - \alpha)} \right) e^{-c t} - \beta}{1 - e^{-c t}},  &\text{if (iii)}, \\
	\dfrac n { 2  (1- \alpha) t }, & \text{if (iv) and }  t \leq  T_2, \phantom{\dfrac12} \\ 
	\dfrac{ - \beta c ( e^{c (t-T_2)} + 1 )}{ c+ \frac{ 8 \beta ( 1- \alpha)} n + c e^{ c (t-T_2)} }, & \text{if (iv) and } t > T_2.			\end{cases}
	\end{equation*}
\end{remark}	

\vspace{12pt}

\begin{remark}
We briefly describe the main idea of our proof here, which uses the parabolic maximum principle and an argument by contradiction. We first define a quantity $$h(x,t) : M \times (0,\infty) \to \R,$$ which will depend on a given solution to Fisher's Equation. We start with $h(x,\varepsilon)>0$ for any sufficiently small $\varepsilon > 0$, and our goal is to prove this quantity $h(x,t)$ remains positive for all points in $M \times \R^{+}$. As suggested in \cite{c08, ch09}, we then compute what we call the time evolution of $h$, namely $\frac{\partial h}{\partial t}$, in the following form:
\[ \frac{\partial h}{\partial t}(x,t) = \Delta h(x,t) + A_1(x,t) \cdot \nabla h(x,t) + A_2(x,t),\]

for some $A_1: M \times (0,\infty) \to \R^n$, and $A_2 : M \times
(0,\infty) \to \R$. We then assume for the sake of a contradiction that there exists a first (with respect to $t$) point $(x_1,t_1)$ where $h(x,t) \leq 0$; it follows that $\frac{\partial h}{\partial t}(x_1, t_1) \leq 0$. Since $h(x_1,t_1)$ must be a local minimum in $M$ of the function $h(x,t_1): M \to \R$, it also follows that $\Delta h(x_1,t_1) \geq 0$, and $\nabla h(x_1,t_1) = (0, \ldots, 0)$. Thus our time evolution simplifies to 
\[ \frac{\partial h}{\partial t}(x_1,t_1) \geq A_2(x_1,t_1) .\] 
By our construction of $h(x,t)$ we will force $A_2(x_1,t_1) > 0$, and so we will have
\[ 0 \geq \frac{\partial h}{\partial t} (x_1,t_1) \geq A_2 (x_1,t_1) > 0,\]
which is a contradiction. Thereby we conclude that $h(x,t) > 0$ for all $(x,t) \in M \times (0,\infty)$.
\end{remark}

 \subsection{Technical Lemmas}
 
In this section we prove the technical lemmas needed in the case that $M$ is a closed manifold. 

Lemma \ref{lemma:red} gives us the time evolution of $h$ in terms of 4 quantities $P_1, P_2, P_3, P_4$ (which sum to $A_2$ above). Lemma \ref{lemma:blue} gives a lower bound for $P_2$ which also applies in the noncompact case. Lemma \ref{lemma:green} introduces quantities $P_5, P_{5.1}, P_{5.2}$ which depend only on $\phi$ and which give a lower bound for $P_3$. Lemma \ref{lemma:yellow} puts a lower bound on $P_5$. Lemma \ref{lemma:purple}, used for our second Harnack inequality, bounds $P_3$ when Lemma \ref{lemma:yellow} is inapplicable. Finally, $P_1$ and $P_4$ are bounded in the proof of the main theorem.

\begin{lemma}
\label{lemma:red}
Let $(M^n, g)$ be a complete Riemannian manifold with Ricci curvature bounded from below by $\op{Ric} \geq -K$. Let $f(x,t): M^n \rightarrow \mathbb R$ be a positive solution to $f_t = \Delta f + c f (1 - f)$ which is $C^2$ in $x$ and $C^1$ in $t$. Let $u(x,t) = \log f(x,t)$, and let $\alpha, \beta, c$ be any constants. Define $h(x,t)$  as follows:
\begin{align*}
h(x,t) := \Delta u + \alpha |\nabla u|^2 + \beta e^u + \varphi, \\
\varphi = \varphi(x,t) = \phi(t) + \psi(x),
\end{align*}
where $\phi(t)$ is any $C^1$ function and $\psi(x)$ is any $C^2$ function. Then the following inequality holds:
\[
h_t - \Delta h - 2 \nabla u \cdot \nabla h \geq P_1 h + P_2 + P_3 + P_4, 
\]
where
\begin{align*}
P_1 & = \dfrac{2 ( 1 - \alpha)} n h - \dfrac{ 4 ( 1-  \alpha)} n (\alpha |\nabla u|^2 + \beta e^u + \phi + \psi ) -  c e^u , \\
\begin{split} 
P_2 & = \dfrac { 2 ( 1 - \alpha)} n ( \alpha^2 |\nabla u|^4 + 2 \phi \psi) - 2 K (1 - \alpha) |\nabla u|^2 + \dfrac{ 4  \alpha ( 1- \alpha)} n \phi |\nabla u|^2 \\
& \qquad \qquad +  |\nabla u|^2 e^u \left ( \dfrac { 4 \alpha \beta ( 1 - \alpha)}n - 2 \beta - \alpha c - c \right ) ,
\end{split} \\
P_3 & = e^{2u} \dfrac{ 2 \beta^2 ( 1 - \alpha)} n + e^u \left ( \dfrac{ 4 \beta ( 1 - \alpha )} n \phi + c \phi + c \beta \right ) + \dfrac{ 2 ( 1 - \alpha)} n \phi^2 + \phi_t , \\
P_4 & = \dfrac{ 4 \alpha ( 1-  \alpha)} n \psi |\nabla u|^2 - 2 \nabla u \cdot \nabla \psi + e^u \psi \left ( c + \dfrac{ 4 \beta ( 1 - \alpha)} n \right ) + \dfrac{ 2 ( 1-  \alpha)}n \psi^2 - \Delta \psi.
\end{align*}
\end{lemma}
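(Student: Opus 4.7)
The proof will be a direct computation: apply the parabolic operator $\partial_t - \Delta - 2\nabla u\cdot\nabla$ to each piece of $h$, invoke Bochner's formula to handle the cross-terms that arise, apply $|\nabla^2 u|^2 \ge (\Delta u)^2/n$ to get a scalar bound, and finally substitute $\Delta u = h - (\alpha|\nabla u|^2 + \beta e^u + \phi + \psi)$ to expose the term $P_1 h$. The first step is to convert Fisher's equation to an equation for $u=\log f$. A straightforward calculation gives
\[ u_t = \Delta u + |\nabla u|^2 + c - c e^u, \]
which will be the source of every time derivative that follows.

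Next I would compute the four pieces of $\partial_t h - \Delta h - 2\nabla u\cdot \nabla h$ one by one. For $\Delta u$, differentiating the identity above in space and using the Bochner formula
\[ \Delta |\nabla u|^2 = 2|\nabla^2 u|^2 + 2\nabla u\cdot \nabla \Delta u + 2\op{Ric}(\nabla u,\nabla u) \]
yields
\[ (\Delta u)_t - \Delta \Delta u - 2\nabla u\cdot \nabla \Delta u = 2|\nabla^2 u|^2 + 2\op{Ric}(\nabla u,\nabla u) - ce^u|\nabla u|^2 - ce^u\Delta u. \]
For $\alpha|\nabla u|^2$, using $(|\nabla u|^2)_t = 2\nabla u\cdot\nabla u_t$ and the same Bochner identity gives
\[ \alpha\bigl[(|\nabla u|^2)_t - \Delta |\nabla u|^2 - 2\nabla u\cdot\nabla|\nabla u|^2\bigr] = -2\alpha|\nabla^2 u|^2 - 2\alpha\op{Ric}(\nabla u,\nabla u) - 2\alpha c e^u|\nabla u|^2. \]
For $\beta e^u$, using $\nabla e^u = e^u\nabla u$ and $\Delta e^u = e^u(\Delta u + |\nabla u|^2)$,
\[ \beta\bigl[(e^u)_t - \Delta e^u - 2\nabla u\cdot\nabla e^u\bigr] = -2\beta e^u|\nabla u|^2 + \beta c e^u - \beta c e^{2u}. \]
Finally, $\phi(t)+\psi(x)$ contributes $\phi_t - \Delta \psi - 2\nabla u\cdot\nabla\psi$. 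Adding these four contributions, the $|\nabla^2 u|^2$ coefficient becomes $2(1-\alpha)$, and applying $|\nabla^2 u|^2\ge(\Delta u)^2/n$ along with $\op{Ric}\ge -K$ produces an inequality whose right-hand side is a polynomial in $\Delta u$, $|\nabla u|^2$, $e^u$, $\phi$, $\psi$ together with $\phi_t$, $\Delta\psi$ and $\nabla u\cdot\nabla\psi$.

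The last step is algebraic. Writing $Q := \alpha|\nabla u|^2 + \beta e^u + \phi + \psi$ so that $\Delta u = h - Q$, I would expand
\[ \tfrac{2(1-\alpha)}{n}(\Delta u)^2 = \tfrac{2(1-\alpha)}{n}\bigl(h^2 - 2hQ + Q^2\bigr), \qquad -ce^u\Delta u = -ce^u h + ce^u Q, \]
and collect all terms containing $h$ as a linear factor; these combine to exactly $P_1 h$. What remains is then sorted into the buckets $P_2, P_3, P_4$ by the following rule: terms containing a factor of $|\nabla u|^2$ (but no $\psi$) go into $P_2$, terms depending only on $e^u$, $\phi$ and their derivatives go into $P_3$, and every term carrying a factor of $\psi$, $\nabla\psi$ or $\Delta\psi$ goes into $P_4$.

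The only real obstacle is bookkeeping: the cross terms from $Q^2$ and from $ce^u Q$ must be matched against the $|\nabla u|^2 e^u$, $e^{2u}$, $\phi e^u$, $\psi e^u$ and $\phi\psi$ contributions so that nothing is double-counted. In particular one must verify that the $\beta c e^{2u}$ generated by $\beta[(e^u)_t - \cdots]$ cancels exactly against the $\beta c e^{2u}$ coming from $c e^u \cdot \beta e^u$ inside $ce^u Q$, leaving the clean coefficient $\tfrac{2\beta^2(1-\alpha)}{n}$ on $e^{2u}$ in $P_3$. Once this accounting is done, the four quantities $P_1, P_2, P_3, P_4$ appear in exactly the stated form and the lemma follows.
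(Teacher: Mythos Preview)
Your proposal is correct and follows essentially the same route as the paper: derive the equation for $u$, apply the operator to each summand of $h$, use Bochner and the Hessian--Laplacian inequality, then substitute $\Delta u = h - Q$ and sort. The only quibble is your sorting rule: the cross term $\tfrac{4(1-\alpha)}{n}\phi\psi$ coming from $Q^2$ lands in $P_2$ (as $\tfrac{2(1-\alpha)}{n}\cdot 2\phi\psi$), not in $P_4$, so ``every term carrying a factor of $\psi$ goes into $P_4$'' isn't quite right---but you already flagged the bookkeeping as the delicate part, and the computation itself is sound.
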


\begin{remark}
Note that Lemma \ref{lemma:red} will be used in the proofs of both Theorem \ref{theorem:black} and Theorem \ref{theorem:white}, with different choices of $\alpha, \beta, c, \phi$ and $\psi$. The statement of Lemma \ref{lemma:red} is independent of these choices.
\end{remark}

\begin{proof}
The proof is based on a straightforward but long calculation. Let	$f : M \times [0,\infty) \to \R$ satisfy \eqref{eq:fisher}, hence $u$ must satisfy
	 \begin{equation*}
	   u_t = \Delta u + |\nabla u|^2 + c (1 - e^u).
  \end{equation*}

We then compute:
		   	\begin{align*}
				&(\partial_t - \Delta) u  = c - c e^u + |\nabla u|^2, \\
				&(\partial_t - \Delta) (\Delta u)  = \Delta |\nabla u|^2 - c (\Delta u) e^u - c |\nabla u|^2 e^u, \\
				&(\partial_t - \Delta) (\alpha |\nabla u|^2)  = 2 \alpha \nabla u \cdot \nabla (\Delta u) + 2 \alpha \nabla u \cdot \nabla |\nabla u|^2 - 2 \alpha c |\nabla u|^2 e^u - \alpha \Delta |\nabla u|^2, \\
				&(\partial_t - \Delta) (\beta e^u)  = \beta c e^u - \beta c e^{2u}, \\
				&(\partial_t - \Delta) \varphi(t) = \phi_t - \Delta \psi, \\
				&2 \nabla u \cdot \nabla h  = 2 \nabla u \cdot \nabla (\Delta u) + 2 \alpha \nabla u \cdot \nabla |\nabla u|^2 + 2 \beta |\nabla u|^2 e^u+2\nabla u \cdot \nabla \psi.
			\end{align*}
		Here we use the Weitzenb\"ock-Bochner formula,
			\begin{equation*}
				\Delta |\nabla u|^2 = 2 |\nabla \nabla u|^2 + 2 \nabla u \cdot \nabla (\Delta u) + 2 \op{Ric} (\nabla u, \nabla u),
			\end{equation*}
		where $\nabla \nabla u$ is the  Hessian  of $u(x,t)$.
		
		This leads to the equality
			\begin{align*}
				(\partial_t - \Delta) h - 2 \nabla u \cdot \nabla h & = 	
					2 (1 - \alpha) |\nabla \nabla u|^2 - c e^u (\Delta u) \\
					& \notag \quad - |\nabla u|^2 e^u (2 \alpha c + 2 \beta + c) + 2 (1 - \alpha) \op{Ric}(\nabla u, \nabla u) \\
					& \notag \quad + \beta c e^u - \beta c e^{2 u} + \phi_t- \Delta \psi - 2 \nabla u \cdot \nabla \psi. 
			\end{align*}
		Using Cauchy-Schwarz $|\nabla \nabla u|^2 \geq \dfrac 1 n (\Delta u)^2$ and $\op{Ric}\geq -K$ yields that  
			\begin{align*}
				(\partial_t - \Delta) h - 2 \nabla u \cdot \nabla h & \geq 	
					2 \frac{(1 - \alpha)}{n}(\Delta u)^2 - c e^u (\Delta u) \\
					& \notag \quad - |\nabla u|^2 e^u (2 \alpha c + 2 \beta + c) - 2 (1 - \alpha)K |\nabla u|^2 \\
					& \notag \quad + \beta c e^u - \beta c e^{2 u} + \phi_t- \Delta \psi - 2 \nabla u \cdot \nabla \psi.
			\end{align*}
		Finally, we substitute for $\Delta u$:
			\begin{equation*}
				\Delta u = h - \alpha |\nabla u|^2 - \beta e^u - \phi-\psi,
			\end{equation*}
		to expand and conclude that

\begin{align*}
    \begin{split}
        & h_t - \Delta h -  2 \nabla u \cdot \nabla h \geq \\
			      & \quad h \left ( \dfrac{ 2 ( 1-  \alpha)} n h - \dfrac{ 4 ( 1 - \alpha)} n ( \alpha |\nabla u|^2 + \beta e^u + \phi  + \psi ) - c e^u \right ) \\
        & \quad + \left [ \dfrac{ 2 ( 1 - \alpha)} n ( \alpha^2 | \nabla u |^4 + 2 \phi \psi ) - 2 K  ( 1 - \alpha) |\nabla u|^2 + \dfrac{4 \alpha (  1- \alpha)} n \phi |\nabla u|^2 \right. \\
				     & \qquad + \left. |\nabla u|^2 e^u \left ( \dfrac{ 4 \alpha \beta ( 1 - \alpha)} n - 2 \beta - \alpha c - c \right ) \right ] \\
        & \quad + \left [ e^{2u} \left ( \dfrac{ 2 \beta^2 ( 1 -\alpha)} n \right ) + e^u \left ( \dfrac{ 4 \beta ( 1- \alpha)} n \phi + c \phi + c \beta \right ) + \dfrac {2  (1 - \alpha)} n \phi^2 + \phi_t \right ] \\
        & \quad + \left [ \dfrac{ 4 \alpha ( 1 - \alpha)} n \psi |\nabla u|^2 - 2 \nabla u \cdot \nabla \psi + e^u \psi \left ( c + \dfrac{ 4 \beta ( 1 - \alpha)} n \right ) \right. \\
        & \qquad \left. + \dfrac{ 2 ( 1 - \alpha)} n \psi^2 - \Delta \psi \right]
    \end{split} \\
	 & \qquad \qquad \qquad = P_1 h + P_2 + P_3 + P_4,
\end{align*}
as desired.
\end{proof}

We now show that $P_2$ is nonnegative under the assumptions of Theorem \ref{theorem:black}.

\begin{lemma}
\label{lemma:blue}
If $K = 0$ and assuming that $(i), (ii)$ hold, then for any $x, t$ where $\phi(t), \psi(x) \geq 0$ we have that
\[ P_2 \geq 0. \]
\end{lemma}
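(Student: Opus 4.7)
The plan is to verify that each of the four summands comprising $P_2$ is individually nonnegative under the given hypotheses; the conclusion then follows immediately by summation. Setting $K = 0$ kills the term $-2K(1-\alpha)|\nabla u|^2$. Hypothesis (i), $0 < \alpha < 1$, makes the prefactors $\dfrac{2(1-\alpha)}{n}$, $\dfrac{4(1-\alpha)}{n}$, and $\dfrac{4\alpha(1-\alpha)}{n}$ all strictly positive, so together with $\phi, \psi \geq 0$ three of the four summands, namely $\dfrac{2\alpha^2(1-\alpha)}{n}|\nabla u|^4$, $\dfrac{4(1-\alpha)}{n}\phi\psi$, and $\dfrac{4\alpha(1-\alpha)}{n}\phi|\nabla u|^2$, are manifestly nonnegative.

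All of the real content therefore lies in showing that the coefficient of $|\nabla u|^2 e^u$ is nonnegative, i.e., $\dfrac{4\alpha\beta(1-\alpha)}{n} - 2\beta - \alpha c - c \geq 0$. I will clear the factor $1/n$ and rearrange to isolate $\beta$, obtaining the equivalent condition
\[
\beta \cdot \dfrac{4\alpha(1-\alpha) - 2n}{n} \geq c(1+\alpha).
\]
The key sign observation is that for $0 < \alpha < 1$ and $n \geq 1$ one has $4\alpha(1-\alpha) \leq 1 < 2n$, so the quantity $4\alpha(1-\alpha) - 2n$ is strictly negative. Dividing through flips the inequality and produces exactly
\[
\beta \leq \dfrac{-cn(1+\alpha)}{4\alpha^2 - 4\alpha + 2n},
\]
which is hypothesis (ii).

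There is no serious obstacle here; the lemma is essentially bookkeeping, with hypothesis (ii) algebraically tailored precisely so that the coefficient of $|\nabla u|^2 e^u$ is nonnegative. The only subtlety worth flagging is keeping track of the direction of the inequality when dividing through by the negative quantity $4\alpha(1-\alpha) - 2n$. Summing the four nonnegative summands then yields $P_2 \geq 0$.
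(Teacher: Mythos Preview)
Your proof is correct and follows essentially the same approach as the paper: both arguments observe that, with $K=0$, $0<\alpha<1$, and $\phi,\psi\geq 0$, all summands of $P_2$ are manifestly nonnegative except the coefficient of $|\nabla u|^2 e^u$, and then verify via the same algebraic rearrangement (using $4\alpha(1-\alpha)\leq 1<2n$ to track the sign flip) that this coefficient being nonnegative is exactly condition~(ii).
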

\begin{proof}
We have assumed that $\alpha, 1 - \alpha, \phi, \psi, K \geq 0$. Note that
\[\dfrac{ 4 \alpha \beta ( 1 - \alpha)} n -  2 \beta - \alpha c - c \geq 0 \]
is equivalent to
\[( 4 \alpha ( 1 - \alpha) - 2 n)\beta - c n (\alpha + 1) \geq 0, \]
or
\[( -4 \alpha ( 1 - \alpha  ) + 2n ) \beta \leq - cn  (1  + \alpha), \]
which is exactly the condition $(ii)$ since $2n \geq 1 \geq 4 \alpha ( 1 - \alpha)$.
\end{proof}

Next, we find quantities depending only on $\phi$ which we will eventually use to guarantee that $P_3$ is strictly positive.

\begin{lemma}
\label{lemma:green}
Assume $\alpha < 1$.
Define
\begin{align*}
\mu_1 & := \dfrac c 2  \sqrt{ \dfrac n { 2 (1-\alpha)} }, \\
\nu_1 &= \dfrac{ c + \frac{ 4 \beta ( 1 - \alpha)}n }{ 2 \beta} \sqrt{ \dfrac{ n}{ 2 ( 1- \alpha)} } , \\
\omega_1 &= \sqrt { \dfrac{ 2 ( 1- \alpha)} n }, \\
P_5(\phi) & := - ( \mu_1 + \nu_1 \phi )^2 + ( \omega_1 \phi)^2 + \phi_t, \\
P_{5.1}(\phi) & := \left ( \dfrac {4 \beta ( 1- \alpha)} n + c \right ) \phi + \beta c, \\
P_{5.2}(\phi) & := \dfrac { 2  (1- \alpha)}n \phi^2 + \phi_t.
\end{align*}

Then for any $(x,t)$, $P_5 > 0$ implies that $P_3 > 0$. Alternatively, if $P_{5.1} \geq 0$ and $P_{5.2} > 0$, then $P_3 > 0$.
\end{lemma}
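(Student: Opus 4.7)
The strategy is to view $P_3$ as a quadratic polynomial in the positive variable $y := e^u$,
\[
P_3 = A\,y^2 + B\,y + C,
\]
with $A = \tfrac{2\beta^2(1-\alpha)}{n}$, $B = P_{5.1}(\phi)$, and $C = P_{5.2}(\phi)$ read off directly from the definition of $P_3$. Because $\alpha < 1$, the leading coefficient satisfies $A \geq 0$, with strict inequality whenever $\beta \neq 0$.

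The second assertion is then immediate: if $P_{5.1}(\phi) \geq 0$ and $P_{5.2}(\phi) > 0$, then $A\,y^2 \geq 0$ and $B\,y \geq 0$ for all $y>0$, so $P_3 \geq C > 0$.

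For the first assertion I will assume $\beta \neq 0$ (so that $\nu_1$ is defined) and complete the square:
\[
P_3 = A\Bigl(y + \tfrac{B}{2A}\Bigr)^{2} + C - \tfrac{B^{2}}{4A} \;\geq\; C - \tfrac{B^{2}}{4A}.
\]
The remaining step is the algebraic identification
\[
C - \tfrac{B^{2}}{4A} \;=\; -(\mu_1 + \nu_1\phi)^{2} + (\omega_1 \phi)^{2} + \phi_t \;=\; P_5(\phi),
\]
which reduces to checking that $(\mu_1 + \nu_1\phi)^{2} = B^{2}/(4A)$ and that $(\omega_1\phi)^{2}$ matches the $\phi^{2}$ part of $C$. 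The latter is clear from $\omega_1^{2} = \tfrac{2(1-\alpha)}{n}$; the former is most cleanly handled by noting that
\[
\mu_1 + \nu_1\phi \;=\; \tfrac{1}{2\beta}\sqrt{\tfrac{n}{2(1-\alpha)}}\,B,
\]
which squares to $\tfrac{n}{8\beta^{2}(1-\alpha)}\,B^{2} = B^{2}/(4A)$.

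No real obstacle arises beyond this bit of algebra; once the factorization of $\mu_1 + \nu_1\phi$ through $B$ is spotted, all three coefficient comparisons (constant, linear in $\phi$, and quadratic in $\phi$) collapse into a single identity, and both conclusions of the lemma follow from standard positivity of quadratics.
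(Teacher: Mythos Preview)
Your proof is correct and follows essentially the same approach as the paper: both view $P_3$ as a quadratic in $e^u$ with leading coefficient $\tfrac{2\beta^2(1-\alpha)}{n}$, complete the square (the paper phrases this as $x^2 + 2xy \ge -y^2$), and identify the resulting lower bound $C - B^2/(4A)$ with $P_5$. Your explicit factorization $\mu_1 + \nu_1\phi = \tfrac{1}{2\beta}\sqrt{\tfrac{n}{2(1-\alpha)}}\,B$ is a clean way to verify the key identity, and your note that $\beta \neq 0$ is needed for $\nu_1$ to be defined is a detail the paper leaves implicit (it follows from condition (ii) of Theorem~\ref{theorem:black}).
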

\begin{proof}
Recall that
\[ P_3 (\phi) =   e^{2u} \left ( \dfrac{ 2 \beta^2 ( 1 -\alpha)} n \right ) + e^u \left ( \dfrac{ 4 \beta ( 1- \alpha)} n \phi + c \phi + c \beta \right ) + \dfrac {2  (1 - \alpha)} n \phi^2 + \phi_t.
\]
If $P_5 > 0$, then by using that $x^2 + 2 xy \geq - y^2$, where $x^2 = e^{2u}\left( \frac{2 \beta^2 (1-a)}{n} \right)$, we get that
\begin{align*}
	P_3(\phi) 	& \geq  - \dfrac{n}{8(1-\alpha) \beta^2} \left [ \beta c +  \left ( c  + \dfrac{4 ( 1- \alpha) \beta }n \right ) \phi \right ]^2 + \dfrac{ 2 ( 1 - \alpha)} n \phi^2 + \phi_t \\
		& = - ( \mu_1 + \nu_1 \phi ) ^2 + (\omega_1 \phi)^2 + \phi_t = P_5(\phi) > 0.
	\end{align*}
Alternatively, if $P_{5.1} \geq 0$ and $P_{5.2} > 0$, then since $(1-\alpha) > 0$ we can ignore the first term of $P_3$ and get
	\begin{align*}
		P_3 (\phi) & \geq   e^u \left ( \dfrac{ 4 \beta ( 1- \alpha)} n \phi + c \phi + c \beta \right ) + \dfrac {2  (1 - \alpha)} n \phi^2 + \phi_t  \\
& = e^u P_{5.1} + P_{5.2} > 0.
\end{align*}
\end{proof}

We now find functions $\phi(t)$ such that $P_3(\phi) > 0$. In Lemma \ref{lemma:yellow} we construct $\phi(t)$ in the case that (iii) is true, and in Lemma \ref{lemma:purple} we construct $\phi(t)$ when (iv) is true.

\begin{lemma}
\label{lemma:yellow}
Let $\mu, \nu, \omega$ be any constants such that $\mu \neq 0$, $\nu^2 < \omega^2$ and $\omega > 0$. If for sufficiently small $\eps > 0$ we define
\[
\phi(t) := \dfrac{ \mu \left ( \dfrac 1 { \nu - ( \omega - \eps )} e^{2 \mu ( \omega - \eps) t} - \dfrac 1 { \nu + (\omega - \eps ) } \right ) } { 1  - e^{2 \mu ( \omega - \eps ) t } },
\]
then 
\begin{equation*}
- ( \mu + \nu \phi)^2 +  (\omega \phi)^2  + \phi_t > 0,
\end{equation*}
where $\dis \lim_{t \rightarrow 0^+} \phi(t) = \infty$ and $\phi(t) \geq 0$ for all $t$.
\end{lemma}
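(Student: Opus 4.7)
My plan is to recognize $\phi$ as the explicit solution of a perturbed Riccati ODE and to exploit the $\eps$-gap to produce strict positivity. The key algebraic identity is
\[
-(\mu+\nu\phi)^2 + (\omega\phi)^2 + \phi_t = \bigl(\omega^2 - (\omega-\eps)^2\bigr)\phi^2 + \Bigl[\phi_t - (\mu+\nu\phi)^2 + ((\omega-\eps)\phi)^2\Bigr].
\]
If the bracket vanishes identically and $\phi>0$, then the inequality collapses to $(\omega^2-(\omega-\eps)^2)\phi^2 > 0$, which holds for all sufficiently small $\eps>0$ because $\omega>0$.

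The first step is therefore to show that the prescribed $\phi(t)$ satisfies the Riccati ODE
\[
\phi_t = (\mu+\nu\phi)^2 - ((\omega-\eps)\phi)^2 = (\nu^2-(\omega-\eps)^2)\phi^2 + 2\mu\nu\phi + \mu^2.
\]
Setting $a := \omega-\eps$, the right-hand side factors as $(\nu^2-a^2)\bigl(\phi+\tfrac{\mu}{\nu-a}\bigr)\bigl(\phi+\tfrac{\mu}{\nu+a}\bigr)$, so separating variables and applying partial fractions yields
\[
\ln\!\left|\frac{\phi + \mu/(\nu-a)}{\phi+\mu/(\nu+a)}\right| = 2\mu a\,t + C,
\]
which one solves for $\phi$ to obtain exactly the given formula (the integration constant being absorbed into the normalization that makes the expression blow up at $t=0$). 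Equivalently, one can differentiate the stated $\phi(t)$ directly and verify the ODE by a mechanical computation.

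It remains to confirm the qualitative claims. The hypothesis $\nu^2<\omega^2$ with $\omega>0$ gives $-\omega<\nu<\omega$, so for all sufficiently small $\eps>0$ we have $\nu-a < 0 < \nu+a$. A short case analysis on the sign of $\mu$ then shows that the numerator and the denominator of $\phi(t)$ share the same sign on $(0,\infty)$: if $\mu>0$ both are negative, since $e^{2\mu a t}>1$ while $\tfrac{1}{\nu-a}<0<\tfrac{1}{\nu+a}$; if $\mu<0$ both are positive by the analogous check. Hence $\phi(t) > 0$ strictly for every $t>0$. As $t \to 0^+$ the denominator tends to $0$ while the numerator tends to $\mu\bigl(\tfrac{1}{\nu-a} - \tfrac{1}{\nu+a}\bigr) = \tfrac{2\mu a}{\nu^2-a^2} \neq 0$, so combined with the sign analysis one gets $\phi(t) \to +\infty$. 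The only real obstacle is the bookkeeping of signs in the separation-of-variables step; once the ODE is verified, the desired strict inequality follows immediately from the identity above.
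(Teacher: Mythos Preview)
Your proposal is correct and follows essentially the same line as the paper: verify that the given $\phi$ satisfies the perturbed Riccati equation $-(\mu+\nu\phi)^2+((\omega-\eps)\phi)^2+\phi_t=0$, then use the $\eps$-gap $(\omega^2-(\omega-\eps)^2)\phi^2=(2\eps\omega-\eps^2)\phi^2>0$ together with $\phi>0$ to conclude. The only cosmetic differences are that the paper checks the ODE by brute-force differentiation rather than by recognizing it as a separable Riccati, and establishes positivity via a continuity argument ($\phi$ continuous, never zero, initially $+\infty$) instead of your direct sign analysis of numerator and denominator.
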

\begin{proof}
    Choose $\eps$ small enough so that $\nu^2 < ( \omega - \eps )^2$. We claim that $\phi(t) $ satisfies the following equation:
    \begin{equation*}
      - ( \mu + \nu \phi)^2 +  [(\omega-\eps) \phi]^2  + \phi_t(t) = 0
    \end{equation*}
    for all time. This follows from the direct computation below. On the one hand we get that
\begin{align*}
    - ( \mu + \nu \phi)^2 +  [(\omega-\eps) \phi]^2	& =  \frac{\mu^2 (\omega - \eps)^2 \left( \frac{e^{2 \mu (\omega-\eps) t}}{\nu - (\omega - \eps)} - \frac{1}{\nu + (\omega- \eps)}\right)^2}{\left( 1 - e^{2 \mu (\omega - \eps) t} \right)^2} \\
    & \quad - \left( \mu + \frac{\mu \nu \left( \frac{e^{2\mu(\omega - \eps) t}}{\nu - (\omega - \eps)} - \frac{1}{\nu + (\omega-\eps)} \right)}{1- e^{2 \mu (\omega - \eps) t}}  \right)^2 \\
     & = \dfrac{ \mu^2[ 2 (\omega - \eps)( \omega - \eps - \nu ) ]\left[ 2 (\omega - \eps)( \omega - \eps + \nu ) e^{2 \mu ( \omega - \eps) t }\right] }{(1- e^{2 \mu (\omega - \eps)t})^2 (\nu - (\omega - \eps))^2(\nu + (\omega - \eps))^2} \\
     & = - \dfrac{4 \mu^2 (\omega - \varepsilon)^2 e^{2 \mu (\omega - \varepsilon) t}}{(\nu+(\omega - \varepsilon))(\nu-(\omega - \varepsilon))(e^{2 \mu(\omega - \varphi)t} - 1)^2}.
    \end{align*}
    On the other hand we have
    \begin{gather*}
    	\phi_t(t)		= \frac{2 \mu^2 (\omega - \eps) e^{2 \mu (\omega - \eps)t} }{(1-e^{2 \mu (\omega - \eps)t})(\nu - (\omega - \eps))} + \frac{2 \mu^2 (\omega- \eps) e^{2 \mu (\omega - \eps)t} \left(\frac{e^{2 \mu (\omega - \eps)t}}{\nu - (\omega-\eps)} - \frac{1}{\nu + (\omega - \eps)} \right)}{(1 - e^{2 \mu (\omega - \eps)t})^2} \\
    		 =  \frac{4 \mu^2 (\omega - \eps)^2 e^{2\mu (\omega - \eps) t}}{ (\nu + (\omega - \eps))(\nu - (\omega - \eps))(1- e^{2 \mu (\omega - \eps)t})^2}.
    \end{gather*}
    Therefore it follows that
    \begin{equation*}
      - ( \mu + \nu \phi)^2 +  [(\omega-\eps) \phi]^2  + \phi_t = 0,
    \end{equation*}
    and hence
    \begin{equation*}
    	- (\mu + \nu \phi)^2 + (\omega \phi)^2 + \phi_t = 2 \eps \omega \phi^2 - \eps ^2 \phi^2 = \phi^2 (2 \eps \omega - \eps ^2).
    \end{equation*}
    
    Note that $\nu - (\omega - \eps)$ and $\nu + (\omega- \eps)$ must have different signs since their product is $\nu^2  - (\omega - \eps)^2 < 0$; hence $\phi(t) \neq 0$ for all time. It  then follows that for sufficiently small $\eps$,
\begin{equation*}
- ( \mu + \nu \phi)^2 + (\omega \phi)^2 + \phi_t = \phi^2(2 \varepsilon \omega - \varepsilon^2) > 0.
\end{equation*}

To show that $\lim_{t \to 0^+} \phi(t) = \infty$, we split $\phi(t)$ into two parts. First, note that
\begin{align*}
	\lim_{t \to 0^+} \left( \frac{1}{\nu-(\omega - \varepsilon)} e^{2 \mu (\omega - \varepsilon)t} - \frac{1}{\nu + (\omega - \varepsilon)} \right) & = \frac{1}{\nu-(\omega - \varepsilon)} - \frac{1}{\nu + (\omega - \varepsilon)} \\
		& = \frac{2(\omega - \varepsilon)}{\nu^2-(\omega - \varepsilon)^2} < 0.
\end{align*}
Further, it is clear that
\begin{equation*}
	\lim_{t \to 0^+} \frac{\mu}{1-e^{2 \mu (\omega - \varepsilon) t}} = - \infty.
\end{equation*}
 Combining these two calculations lets us conclude that
 \begin{align*}
 	\lim_{t \to 0^+} \phi(t) = \infty.
 \end{align*}
Finally, since $\phi(t)$ is continuous and starts out positive and $\phi(t) \neq 0$ for any $t > 0$, it follows that $\phi(t) > 0$ for all $t > 0$.    
\end{proof}
    
\begin{remark}
We can also compute $\dis \lim_{t \rightarrow \infty} \phi(t)$.

If $\mu  >0$ then $e^{2 \mu ( \omega - \eps)t } \to \infty$ as $ t \rightarrow \infty$; hence we find that
\begin{equation*}
\lim_{t \rightarrow \infty} \phi(t) = \dfrac{ \frac{ \mu }{ \nu - (\omega - \eps)} }{ -1}  = \dfrac {\mu }{-\nu+(\omega - \eps)}.
\end{equation*}
If $\mu < 0$ then $e^{2 \mu (\omega - \eps)t} \rightarrow 0 $ as $ t \rightarrow \infty$, which gives us
\begin{equation*}
\lim_{t \rightarrow \infty} \phi(t) = \dfrac{ - \mu } {\nu + (\omega - \eps)}.
\end{equation*}

\end{remark}

Next we deal with the other case.

\begin{lemma}
\label{lemma:purple}
Let $\mu_1, \nu_1, \omega_1$ be defined as in Lemma \ref{lemma:green}, and suppose (iv) is true (i.e., (iii) becomes false). Let 
\begin{equation*}
T_2 = T_2(\eps) := \dfrac n { 2 ( 1- \alpha) ( 1-  \eps ) ( - \beta c )} \cdot
\left ( \dfrac { 4 \beta ( 1- \alpha)} n  + c \right ). 
\end{equation*}

If for some sufficiently small $\eps > 0$ we define
\begin{equation*} \phi(t) := \begin{cases}
\dfrac n { 2 ( 1 - \alpha)( 1 -\eps) t }, & t \leq T_2; \\
\dfrac{ - \mu_1 ( e^{ 2 \mu_1 (\omega_1 - \eps) 	(t - T_2)} + 1 ) }
{( \nu_1 + (\omega_1 -\eps) ) + ( \nu_1 - ( \omega_1 - \eps)) e^{2 \mu_1 (\omega_1 - \eps) ( t- T_2)} }, & t > T_2,
\end{cases} 
\end{equation*}
then for $t \leq T_2$ we get $P_{5.1} \geq 0$ and $P_{5.2}> 0$, and for $t > T_2$ we get $P_5 > 0$. Therefore $P_3(\phi) > 0$ for all $t$. 

In addition, $\lim_{t \rightarrow 0^+} \phi(t) = \infty$ and $\phi(t) > 0$ for all $t$.
\end{lemma}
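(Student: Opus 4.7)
The proof splits into the two regimes $t \le T_2$ and $t > T_2$, and in each I would invoke one of the two sufficient conditions supplied by Lemma \ref{lemma:green}. Before analyzing the regimes, I would first verify that the piecewise definition is consistent. A short substitution using the explicit value of $T_2$ gives $\phi(T_2^-) = \frac{-\beta c}{c + 4\beta(1-\alpha)/n}$, and evaluating the second branch at $t = T_2$ (where the exponential factor equals $1$) gives $-\mu_1/\nu_1$; expanding the definitions of $\mu_1$ and $\nu_1$ shows these two quantities agree, so $\phi$ is continuous at $T_2$. The limit $\lim_{t \to 0^+}\phi(t) = \infty$ is immediate from the first branch.

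For $t \le T_2$ I would plug $\phi(t) = n/[2(1-\alpha)(1-\eps)t]$ directly into $P_{5.2} = \frac{2(1-\alpha)}n \phi^2 + \phi_t$; both terms are of order $t^{-2}$ and the difference simplifies to $\frac{n\eps}{2(1-\alpha)(1-\eps)^2 t^2} > 0$. For $P_{5.1}$, note that under (iv) the coefficient $c + 4\beta(1-\alpha)/n$ is positive (since $\beta<0$ makes $-8\beta(1-\alpha)/n > -4\beta(1-\alpha)/n > 0$, and (iv) gives $c \ge -8\beta(1-\alpha)/n$), so $P_{5.1}$ is a strictly increasing affine function of $\phi$ vanishing exactly at $\phi = \phi(T_2)$. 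Since $\phi$ is monotonically decreasing on $(0, T_2]$, we have $\phi(t) \ge \phi(T_2)$ there, and hence $P_{5.1} \ge 0$.

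For $t > T_2$ the closed form is chosen so that $\phi$ solves the Riccati equation $\phi_t = (\mu_1 + \nu_1\phi)^2 - ((\omega_1-\eps)\phi)^2$ with initial value $\phi(T_2) = -\mu_1/\nu_1$ (the value at which $\mu_1 + \nu_1\phi$ vanishes). I would verify this identity directly by differentiating the formula, in the same spirit as the proof of Lemma \ref{lemma:yellow}: writing $E = e^{2\mu_1(\omega_1-\eps)(t-T_2)}$, one finds after collecting terms that $\mu_1 + \nu_1\phi$ and $\phi_t$ both reduce to rational functions of $E$ sharing a common denominator, and the identity $(E+1)^2 - (1-E)^2 = 4E$ makes the Riccati equality collapse. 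Once the Riccati equation is in hand, one has $P_5 = -(\mu_1 + \nu_1\phi)^2 + (\omega_1\phi)^2 + \phi_t = (\omega_1^2 - (\omega_1-\eps)^2)\phi^2 = (2\eps\omega_1 - \eps^2)\phi^2$, which is positive for sufficiently small $\eps$ provided $\phi(t) \ne 0$.

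Finally, I would establish positivity of $\phi$ on $(T_2, \infty)$ by a sign count on the closed form: the numerator $-\mu_1(E+1)$ is negative, and the denominator $(\nu_1 + \omega_1-\eps) + (\nu_1 - \omega_1 + \eps)E$ is also negative, because (iv) forces $\nu_1/\omega_1 \le -1$ (shown by expanding $\nu_1/\omega_1 = nc/[4\beta(1-\alpha)] + 1$ and using $c \ge -8\beta(1-\alpha)/n$), so for sufficiently small $\eps$ both $\nu_1 + (\omega_1-\eps)$ and $\nu_1 - (\omega_1-\eps)$ are strictly negative. Combining the two regimes via Lemma \ref{lemma:green} then yields $P_3(\phi) > 0$ for all $t > 0$. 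The main obstacle is the direct Riccati verification in the $t > T_2$ case; it is algebraically similar to the calculation in Lemma \ref{lemma:yellow}, but introducing the single symbol $E$ for the exponential factor is essential to keep the bookkeeping tractable.
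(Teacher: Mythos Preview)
Your proposal is correct and follows essentially the same route as the paper: verify continuity at $T_2$, compute $P_{5.2}$ directly on $(0,T_2]$, handle $P_{5.1}$ via monotonicity of $\phi$ and the sign of the coefficient $c+4\beta(1-\alpha)/n$, and on $(T_2,\infty)$ check the Riccati identity to reduce $P_5$ to $(2\eps\omega_1-\eps^2)\phi^2$. The only minor deviation is in the positivity of $\phi$ for $t>T_2$: the paper argues by continuity together with non-vanishing of the numerator, whereas you do a direct sign count on numerator and denominator using the observation $\nu_1/\omega_1\le -1$ (equivalent to (iv)); both arguments are valid and equally short.
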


\begin{proof}

We have that for $\varepsilon < 1$, 
\[ \lim_{t \to 0^+} \phi(t) = \lim_{t \to 0^+} \frac{n}{2(1-\alpha)(1-\varepsilon)t} = \infty.\]

To show that $\phi(t)$ is continuous at $T_2$, we check its limit from the left and from the right.

The limit from the left:
	\begin{align*}
	\lim_{t \to T_2^-} \phi(t) & = \frac{n}{2(1-\alpha)(1-\varepsilon)T_2} \\
		& = \frac{-\beta c n}{4 \beta (1-\alpha) + cn}.
	\end{align*}
	Now the limit from the right:
	\begin{align*}
		\lim_{t \to T_2^+} \phi(t) & = \frac{-\mu_1(1+1)}{(\nu_1+(\omega_1 - \varepsilon)) + (\nu_1-(\omega_1 - \varepsilon))} \\
			& = \frac{-2\mu_1}{2\nu_1}\\
			& = - \frac{c}{2} \cdot \frac{2 \beta n}{(cn + 4 \beta(1-\alpha))} \\
			& = \frac{-\beta c n}{4 \beta (1-\alpha) + cn}.
	\end{align*}
	Therefore $\phi(t)$ is continuous.
	
	Next we check that $\phi(t) > 0$ for all $t > 0$. Note that $\phi(t)$ is continuous, and clearly is positive between 0 and $T_2$. For $t \geq T_2$, since $\mu_1 \neq 0$, it follows that 
	\begin{equation*}
		- \mu_1 ( e^{ 2 \mu_1 (\omega_1 - \eps) 	(t - T_2)} + 1 ) \neq 0,
	\end{equation*}
	and therefore $\phi(t) \neq 0$ for any $t \geq T_2$. By continuity, it follows that $\phi(t) > 0$ for all $t > 0$.
	
	\vspace{12pt}
	Next we show that for $t \leq T_2$ we have that $P_{5.1} \geq 0$. That is, we need
	\begin{equation*}
	P_{5.1}=\left( \frac{4 \beta (1-\alpha)}{n} + c \right)\phi(t) + \beta c \geq 0.
	\end{equation*}
	First we note that condition (iv) states that $\left( \frac{ 4 \beta (1-\alpha)}{n} + c \right) \geq 0$. Since $\phi(t)$ is decreasing in $t < T_2$ it suffices to check that $P_{5.1}\geq 0$ holds for $t=T_2$:
	
	\begin{align*}
		\left( \frac{4 \beta (1-\alpha)}{n} + c \right) \phi(t) + \beta c & \geq 	\left( \frac{4 \beta (1-\alpha)}{n} + c \right) \phi(T_2) + \beta c \\
		& = \left( \frac{4 \beta (1-\alpha)}{n} + c \right)\left(\frac{-\beta c}{\left( \frac{4 \beta (1-\alpha)}{n} + c \right)} \right) + \beta c \\
		& = 0.
	\end{align*}
	Therefore $P_{5.1}\geq 0$ for all $t \leq T_2$.
	
	\vspace{12pt}
	Now we show that $P_{5.2}>0$ for all $t \leq T_2$. That is, we need
	\begin{equation*}
		P_{5.2}= \frac{2(1-\alpha)}{n} \phi(t)^2 + \phi_t(t) > 0.
	\end{equation*}
	We have that
	\begin{align*}
		P_{5.2} & = \frac{2(1-\alpha)}{n} \left[ \frac{n}{2(1-\alpha)(1-\varepsilon)t} \right]^2 + \frac{-n}{2(1-\alpha)(1-\varepsilon)t^2} \\
		& = \frac{n}{2(1-\alpha)(1-\varepsilon)^2 t^2} - \frac{n}{2(1-\alpha)(1-\varepsilon)t^2} \\
		& = \frac{\varepsilon n}{2 (1-\alpha)(1-\varepsilon)^2 t^2} >0 .
	\end{align*}
	This implies that $P_3(\phi)>0$ for $t\leq T_2$. Next we show that $P_5>0$ for all $t > T_2$. That is, we need that
	\begin{equation*}
		P_5 = -(\mu_1 + \nu_1 \phi)^2 + (\omega_1 \phi)^2 + \phi_t> 0
	\end{equation*}
	for
	\begin{equation*}
		\phi(t) = \dfrac{ - \mu_1 ( e^{ 2 \mu_1 (\omega_1 - \eps) 	(t - T_2)} + 1 ) }
{( \nu_1 + (\omega_1 -\eps) ) + ( \nu_1 - ( \omega_1 - \eps)) e^{2 \mu_1 (\omega_1 - \eps) ( t- T_2)} }.
	\end{equation*}
We first show that for $t > T_2$, $\phi(t)$ satisfies
\begin{equation*} 
- (\mu_1 + \nu_1 \phi)^2 + [(\omega_1 - \eps)\phi]^2 + \phi_t = 0.
\end{equation*}
	Plugging in $\phi(t)$ for $t > T_2$ gives us that
	\begin{align*}
	 -(\mu_1+\nu_1 \phi)^2 + [(\omega_1 & - \varepsilon) \phi ]^2 = \\
		& = -\left[ \mu_1 - \dfrac{ \mu_1 \nu_1 ( e^{ 2 \mu_1 (\omega_1 - \eps) 	(t - T_2)} + 1 ) } 
{( \nu_1 + (\omega_1 -\eps) ) + ( \nu_1 - ( \omega_1 - \eps)) e^{2 \mu_1 (\omega_1 - \eps) ( t- T_2)} }\right]^2\\
		& \hspace{12pt} + \left[ (\omega_1 - \varepsilon) \dfrac{ - \mu_1 ( e^{ 2 \mu_1 (\omega_1 - \eps) 	(t - T_2)} + 1 ) }
{( \nu_1 + (\omega_1 -\eps) ) + ( \nu_1 - ( \omega_1 - \eps)) e^{2 \mu_1 (\omega_1 - \eps) ( t- T_2)} } \right]^2 \\
			& = \dfrac{\mu_1^2 (\omega_1 - \varepsilon)^2 \left[  - \left(1 - e^{2 \mu_1(\omega_1 - \varepsilon)(t-T_2)}  \right)^2 + \left(e^{2 \mu_1(\omega_1 - \varepsilon)(t-T_2)} + 1 \right)^2 \right]}{\left[ (\nu_1 + (\omega_1 - \varepsilon)) + (\nu_1 - (\omega_1 - \varepsilon))e^{2 \mu_1 (\omega_1 - \varepsilon) (t - T_2)} \right]^2} \\
			& = \frac{4 \mu_1^2 (\omega_1 - \varepsilon)^2 e^{2\mu_1 (\omega_1 - \varepsilon) (t-T_2)}}{\left[ (\nu_1 + (\omega_1 - \varepsilon)) + (\nu_1 - (\omega_1 - \varepsilon))e^{2 \mu_1 (\omega_1 - \varepsilon) (t - T_2)} \right]^2}.
	\end{align*}
	
	Similarly, we have that
	\begin{align*}
		\phi_t(t) & = \dfrac{- 2\mu_1^2(\omega_1 - \varepsilon)e^{2\mu_1(\omega_1 - \varepsilon)(t-T_2)} \left[ (\nu_1 + (\omega_1 - \varepsilon)) + (\nu_1 - (\omega_1 - \varepsilon))e^{2 \mu_1 (\omega_1 - \varepsilon)(t-T_2)} \right]}{\left[ (\nu_1 + (\omega_1 - \varepsilon)) + (\nu_1 - (\omega_1 - \varepsilon))e^{2 \mu_1 (\omega_1 - \varepsilon) (t - T_2)} \right]^2}\\
			& \hspace{12pt} - \left[ \dfrac{ (\nu_1 - (\omega_1 - \varepsilon))(2 \mu_1 (\omega_1 - \varepsilon)) e^{2 \mu_1 (\omega_1 - \varepsilon)(t-T_2)} \left[- \mu_1 (e^{2 \mu_1 (\omega_1 - \varepsilon)(t-T_2)} +1)\right] }{\left[ (\nu_1 + (\omega_1 - \varepsilon)) + (\nu_1 - (\omega_1 - \varepsilon))e^{2 \mu_1 (\omega_1 - \varepsilon) (t - T_2)} \right]^2} \right]\\
			& = - \frac{4 \mu_1^2 (\omega_1 - \varepsilon)^2 e^{2\mu_1 (\omega_1 - \varepsilon) (t-T_2)}}{\left[ (\nu_1 + (\omega_1 - \varepsilon)) + (\nu_1 - (\omega_1 - \varepsilon))e^{2 \mu_1 (\omega_1 - \varepsilon) (t - T_2)} \right]^2}.
	\end{align*}
Therefore
\begin{equation*}
 - (\mu_1 + \nu_1 \phi)^2 + [(\omega_1 - \eps)\phi]^2 + \phi_t = 0,
\end{equation*}
and it follows that
\begin{equation*}
P_5 = - (\mu_1 + \nu_1 \phi)^2 + (\omega_1 \phi)^2 + \phi_t \\
 =  (2 \varepsilon \omega_1 - \varepsilon^2) \phi^2 > 0
\end{equation*}
for small enough $\varepsilon$. Therefore $P_3(\phi) > 0$ for $t > T_2$.
\end{proof}

\begin{remark}
Here we observe that
	\begin{align*}
		\lim_{t \to \infty} \phi(t) & = \lim_{t \to \infty} \dfrac{ - \mu_1 ( e^{ 2 \mu_1 (\omega_1 - \eps) 	(t - T_2)} + 1 ) }  {( \nu_1 + (\omega_1 -\eps) ) + ( \nu_1 - ( \omega_1 - \eps)) e^{2 \mu_1 (\omega_1 - \eps) ( t- T_2)} } \\
			& = \frac{-\mu_1}{\nu_1-(\omega_1-\varepsilon)} = \frac{\mu_1}{-\nu_1+(\omega_1-\varepsilon)},
\end{align*}
which is the same limit as $\phi(t)$ from Lemma \ref{lemma:yellow} since $\mu_1 > 0$.
\end{remark}

  Now we are ready to finish the proof of Theorem \ref{theorem:black}.

    \subsection{Proof of Theorem \ref{theorem:black}}

Let  $f : M \times [0, \infty) \to \R$ be a positive solution of $f_t = \Delta f + c f (1 - f)$ for $c > 0$, and assume that the following hold:
        \begin{itemize}
    				\item[(i)] $0 < \alpha < 1$,
					\item[(ii)]$ \beta \leq \frac{-c n (1 + \alpha)}{4 \alpha ^2 - 4 \alpha + 2 n} < 0$.
			\end{itemize}

        Let $u = \log f$, and define
            \begin{equation*}
                h(x,t)  := \Delta u + \alpha |\nabla u|^2 + \beta e^u + \varphi,
              \end{equation*}
              where
              \begin{equation*}
              \varphi = \varphi(x,t) = \phi(t) + \psi(x)
              \end{equation*}
              and since we are in the closed case we set $\psi(x)=0$.
                                              
              With $\mu_1, \nu_1, \omega_1$, and $T_2$ as defined in Lemma \ref{lemma:green} and Lemma \ref{lemma:purple}, and $\varepsilon > 0$ small enough to satisfy Lemmas \ref{lemma:yellow} and \ref{lemma:purple}, we let
                \begin{align*}
                \phi(t) & = \begin{cases}
 \dfrac{ \mu_1 \left ( \dfrac 1 { \nu_1 - ( \omega_1 - \eps )} e^{2 \mu_1 ( \omega_1 - \eps) t} - \dfrac 1 { \nu_1 + (\omega_1 - \eps ) } \right ) } { 1  - e^{2 \mu_1 ( \omega_1 - \eps ) t } }, &  \text{ if (iii)}, \\
\dfrac n { 2 ( 1 - \alpha)( 1 -\eps) t }, & \text{ if (iv) and } t \leq T_2, \\
\dfrac{ - \mu_1 ( e^{ 2 \mu_1 (\omega_1 - \eps) 	(t - T_2)} + 1 ) }
{( \nu_1 + (\omega_1 -\eps) ) + ( \nu_1 - ( \omega_1 - \eps)) e^{2 \mu_1 (\omega_1 - \eps) ( t- T_2)} }, & \text{ if (iv) and } t > T_2.
\end{cases}
            \end{align*}

We first show that $h(x,t)>0$ for all $t$. Suppose for the sake of a contradiction that $h \leq 0$ somewhere; let $t_1$ be the first time such that $\min_x h(x,t) = 0$. Since $M$ is closed the minimum is attained, say at the point $(x_1,t_1)$. By Lemmas \ref{lemma:yellow} and \ref{lemma:purple}, $\lim_{t \to 0^+} \phi(t)  =\infty$ so it follows that $t_1$ exists.

By applying Lemma \ref{lemma:red}, we get that
\begin{equation} \label{eq:contra}
h_t - \Delta h - 2 \nabla u \cdot \nabla h \geq P_1 h + P_2 + P_3 + P_4, 
\end{equation}
where $P_1, \ldots, P_4$ are defined as in Lemma \ref{lemma:red}. Note that in the case (iv), the derivative $\phi_t$ at $t = T_2$ is considered to be the derivative from the left.

We have $P_1 h = 0 $ since $h(x_1,t_1) = 0$. Lemma 2 yields that $P_2 \geq 0$ since $K = 0$, and $P_4 = 0$ since $\psi(x) \equiv 0$.

Since $(x_1,t_1)$ is the first space-time where $h(x,t) = 0$, the maximum principle yields that $h_t (x_1,t_1) \leq 0$ (where this is a derivative as $t \to t_1^-$), $\Delta h(x_1,t_1) \geq 0$ and $\nabla h (x_1,t_1) = 0$.

Hence \eqref{eq:contra} yields that 
\begin{equation}
\label{eq:doom}
 0 \geq h_t - \Delta h - 2 \nabla u \cdot \nabla h \geq P_1 h + P_2 + P_3 + P_4 \geq P_3.
\end{equation}

Now we split into cases based on whether (iii) or (iv) holds.

If (iii) is true, since $c> 0$ we have the following inequalities:
\begin{align*}
\dfrac{ 4 \beta ( 1 - \alpha)} n < c + \dfrac{ 4 \beta  (1 - \alpha)} n  < - \dfrac{ 4 \beta ( 1 - \alpha)} n, \\
\left | c + \dfrac {4  \beta  (1 - \alpha)} n \right | < \left | \dfrac{ 4 \beta ( 1- \alpha)}n \right | , \\
\left ( \dfrac{ c + \frac{4 \beta ( 1- \alpha)} n } { 2 \beta } \right )^2  < \left ( \dfrac{ 2  (1-  \alpha)} n \right )^2 ,\\
\nu_1^2 = \left ( \dfrac{ c + \frac{4 \beta ( 1- \alpha)} n } { 2 \beta } \right )^2 \dfrac{ n }{ 2 ( 1- \alpha)} < \omega_1^2 = \dfrac{ 2  (1 - \alpha)} n.
\end{align*}
Therefore by Lemmas \ref{lemma:green} and \ref{lemma:yellow} it follows that $P_3 > 0$, which contradicts \eqref{eq:doom}.

Otherwise, if (iv) is true it follows from Lemmas \ref{lemma:green} and \ref{lemma:purple} that $P_3 > 0$ again, which still contradicts \eqref{eq:doom}.

This proves that $h(x,t) > 0$ for all $x, t$. Finally, letting $\eps \to 0$ with
        \begin{equation*}
        \left. \phantom{\dfrac 12} T_2 \right |_{\eps = 0} = \dfrac n { 2 (  1 - \alpha )( - \beta c )} \left ( \dfrac{ 4 \beta ( 1 - \alpha)} n + c \right ),
       \end{equation*}
we get that $\phi(t) \to \phi_0(t)$ where
	\begin{equation*}\phi_0(t) = \begin{cases}
\dfrac{\left( \frac{\beta c n}{c n + 8 \beta (1 - \alpha)} \right) e^{-c t} - \beta}{1 - e^{-c t}},  &\text{if (iii)}, \\
\dfrac n { 2  (1- \alpha) t }, & \text{if (iv) and} \! \! \! \left. \phantom{\dfrac 12} t \leq  T_2  \right |_{\eps = 0},  \\ 
\dfrac{ - \beta c ( e^{c (t-T_2)} + 1 )}{ c+ \frac{ 8 \beta ( 1- \alpha)} n + c e^{ c (t-T_2)} }, & \text{if (iv) and} \! \! \! \left. \phantom{\dfrac12} t > T_2 \right |_{\eps = 0}. \end{cases}
	\end{equation*}
	Therefore
	\begin{equation*}
 \lim_{\eps \to 0} h(x,t) = \Delta u + \alpha |\nabla u|^2 + \beta e^u + \phi_0(t) \geq 0
        \end{equation*}
        as desired.
     
%%%%%%%%%%%%%%%%%%%%%%%%
\section{On Complete Noncompact Manifolds}%%%%
%%%%%%%%%%%%%%%%%%%%%%%%
In this section, we study the case in which the manifold is complete but noncompact. The idea is similar to the case when the manifold is compact without boundary. The main technical difficulty here is to ensure that the minimum of the Harnack quantity is attained in a compact region. We first state our main theorem of this section. 

	\subsection{Statement of Theorem }
		
		\begin{theorem}
		\label{theorem:white}
		Let  $(M^n,g)$ be an $n$-dimensional complete (noncompact) Riemannian manifold with non-negative Ricci curvature. Let $f(x,t):M \times [0, \infty) \to \R$ be a positive solution of the Fisher-KPP equation $f_t = \Delta f + c f(1 - f)$, where $f$ is $C^2$ in $x$ and $C^1$ in $t$, and $c>0$ is a constant. Let $u= \log f$, then we have
			\begin{equation}
	   			 \Delta u + \alpha |\nabla u|^2 + \beta e^u + \phi_1(t) \geq 0,
			\end{equation}
			provided the following constraints are satisfied:
				\begin{itemize}
	    			\item[(i)] $0 < \alpha < 1$,
	   				\item[(ii)]  $\beta < \dfrac{-c n(1 + \alpha)}{2 \left(2 \alpha^2 - 2 \alpha + n \right)} < 0$,
					\item[(iii)] $ \displaystyle \dfrac{ -cn ( 2 + \sqrt 2)}{ 4 ( 1- \alpha)} < \beta < \dfrac{ -cn ( 2 - \sqrt 2 )}{4 ( 1 - \alpha)}, $
				\end{itemize}
		where
		\begin{equation*}
		\phi_1(t) = \dfrac{ \mu_2 \left ( \frac 1 { \nu_2 - \omega_2} e^{ 2 \mu_2 \omega_2 t } - \frac 1 { \mu_2 + \omega_2 } \right )}{ 1 - e^{2 \mu_2 \omega_2 t}},
		\end{equation*}
		with
		\begin{align*}
		\mu_2 & =  \beta c \sqrt{ \frac{ 2 ( 1- \alpha)} {c ( - cn - 8 \beta ( 1- \alpha))} }, \\
		\nu_2 & = \left ( \frac{ 4 \beta ( 1- \alpha)} n + c \right ) \cdot \sqrt { \frac{ 2  ( 1- \alpha)} { c ( - cn  - 8 \beta ( 1- \alpha))} }, \\
		\omega_2 & =  \sqrt{  \frac { 2 ( 1 -\alpha)} n }.
		\end{align*}
		\end{theorem}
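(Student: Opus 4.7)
The plan is to adapt the argument for Theorem \ref{theorem:black} by introducing a spatial cutoff function $\psi(x)$ into the Harnack quantity $h$, so that any would-be zero of $h$ is trapped in a compact set where the parabolic maximum principle can be applied. Specifically, I would set $h(x,t) = \Delta u + \alpha |\nabla u|^2 + \beta e^u + \phi(t) + \psi(x)$, with $\phi(t)$ taken from Lemma \ref{lemma:yellow} using parameters $\mu_2, \nu_2, \omega_2$ (perturbed by a small $\eps > 0$), and $\psi(x) \geq 0$ a Calabi-type smoothing of a distance-based barrier centered at some fixed $x_0 \in M$. The cutoff should be identically zero on a ball of radius $R$ around $x_0$ and grow rapidly outside; under the nonnegative Ricci assumption, the Laplacian comparison $\Delta d \leq (n-1)/d$ provides standard control over $|\nabla \psi|$ and $\Delta \psi$.

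Next, suppose for contradiction that $h \leq 0$ somewhere, and let $(x_1, t_1)$ be the first space-time point with $h(x_1, t_1) = 0$. The blow-up of $\phi$ as $t \to 0^+$ forces $t_1 > 0$, and the growth of $\psi$ at infinity confines $x_1$ to a compact region, so the parabolic maximum principle gives $h_t \leq 0$, $\Delta h \geq 0$, and $\nabla h = 0$ at $(x_1, t_1)$. Substituting into Lemma \ref{lemma:red} with $K = 0$, the $P_1 h$ term vanishes, $P_2 \geq 0$ by Lemma \ref{lemma:blue} (which uses conditions (i) and (ii)), and the whole contradiction reduces to showing $P_3 + P_4 > 0$ at $(x_1, t_1)$.

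The heart of the argument is the handling of $P_4$. My plan is to absorb $-2\nabla u \cdot \nabla \psi$ into the $|\nabla u|^2$ terms of $P_2$ via Young's inequality, use Laplacian comparison to dominate $-\Delta \psi$, and regroup the remaining $\psi$-dependent pieces, together with the cross term $\frac{4(1-\alpha)}{n} \phi \psi$ from $P_2$, into a modified $P_3$-type quadratic in $e^u$. A Cauchy-Schwarz step in the spirit of Lemma \ref{lemma:green} then reduces the desired positivity to $-(\mu_2 + \nu_2 \phi)^2 + (\omega_2 \phi)^2 + \phi_t > 0$, with precisely the constants appearing in the theorem statement. Condition (iii) is exactly the requirement $\nu_2^2 < \omega_2^2$ needed to apply Lemma \ref{lemma:yellow}; a short computation shows it is equivalent to a quadratic in $\beta(1-\alpha)/n$ whose roots produce the $\sqrt{2}$-factors in the two-sided bound on $\beta$. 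Lemma \ref{lemma:yellow} then supplies a $\phi$ making the strict inequality hold, yielding the contradiction and hence $h > 0$.

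Finally, letting $R \to \infty$ kills the $\psi$ contribution on any fixed compact set, and letting $\eps \to 0$ recovers the stated $\phi_1(t)$. I expect the main obstacle to be the bookkeeping in the $P_4$-absorption step: the exact form of $\mu_2, \nu_2, \omega_2$ is dictated by how the $\psi$-derivative terms merge with $P_2$ and $P_3$, and any sign or factor error would shift the admissible range of $\beta$ in condition (iii). A secondary worry is ensuring that the $\psi$-terms introduced by the cutoff really do vanish in the $R \to \infty$ limit uniformly on compact sets, which requires choosing the growth rate of $\psi$ so that its first and second derivative bounds degenerate as $R \to \infty$.
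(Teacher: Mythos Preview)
Your overall architecture---localize with a spatial barrier $\psi$, apply the maximum principle to $h = \Delta u + \alpha|\nabla u|^2 + \beta e^u + \phi + \psi$, and reduce to a Riccati inequality handled by Lemma~\ref{lemma:yellow}---matches the paper. You also correctly identify that condition (iii) is precisely $\nu_2^2 < \omega_2^2$. But the central algebraic step, the one you flag as ``the main obstacle,'' is misidentified, and as described your plan will not produce the constants $\mu_2,\nu_2$.

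The decisive maneuver in the paper (Lemma~\ref{lemma:orange}) is to absorb the cross term $e^u\psi\bigl(c + \tfrac{4\beta(1-\alpha)}{n}\bigr)$ from $P_4$ into the $e^{2u}$ coefficient, using part of the $\tfrac{2(1-\alpha)}{n}\psi^2$ term via $2xy + x^2 \ge -y^2$. This replaces the coefficient $\tfrac{2\beta^2(1-\alpha)}{n}$ by the smaller constant $A = \tfrac{2\beta^2(1-\alpha)}{n} - \tfrac{n(c + 4\beta(1-\alpha)/n)^2}{8(1-\alpha-\eps')}$, and it is this $A$ that dictates $\mu_2 = \beta c/(2\sqrt{A})$ and $\nu_2 = (c + 4\beta(1-\alpha)/n)/(2\sqrt{A})$ after the Cauchy--Schwarz step (Lemma~\ref{lemma:cyan}). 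The $\tfrac{4(1-\alpha)}{n}\phi\psi$ term from $P_2$ that you single out plays no role here---it is nonnegative and is simply discarded with the rest of $P_2$. Likewise, $-2\nabla u\cdot\nabla\psi$ is absorbed not into $P_2$ but into the $\tfrac{4\alpha(1-\alpha)}{n}\psi|\nabla u|^2$ term already sitting in $P_4$; completing the square leaves a $|\nabla\psi|^2/\psi$ term, which together with $-\Delta\psi$ is beaten by the residual $\tfrac{2\eps'}{n}\psi^2$ (Lemma~\ref{lemma:brown}). This clean split $P_3 + P_4 \ge P_6 + P_7$, with $P_6$ $\psi$-free and $P_7$ $\phi$-free, is what makes the argument close.

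A second issue is your choice of $\psi$. A cutoff that is identically zero on $B_R$ and merely ``grows rapidly outside'' does not force the infimum of $h$ to be attained: you need $\psi \to \infty$ at a finite boundary. The paper takes $\psi(x) = k\,\dfrac{R^2 + r^2}{(R^2 - r^2)^2}$ with $r = d(x,p)$, which blows up on $\partial B_R(p)$ and satisfies $|\nabla\psi|^2 \le 18\psi^3$, $\Delta\psi \le C\psi^2$ under the Laplacian comparison $\Delta r \le (n-1)/r$. These homogeneities are exactly what let $\tfrac{2\eps'}{n}\psi^2$ dominate for large $k$; a generic cutoff will not have them. Finally, the cut-locus issue is handled by Calabi's trick applied \emph{after} choosing this $\psi$ (shifting the base point along a minimizing geodesic), not by pre-smoothing the barrier itself.
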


\vspace{12pt}
\subsection{Technical Lemmas}

In this subsection, we state and prove some additional lemmas which will be needed in the proof of Theorem 2. Lemma \ref{lemma:orange} allows us to substitute the sum $P_6 + P_7$ for  $P_3 + P_4$; then Lemma \ref{lemma:cyan} bounds $P_6$ using a new quantity $P_8$. Lemma \ref{lemma:magenta} allows us to apply Lemma \ref{lemma:yellow} to control $P_8$. Lemma \ref{lemma:brown} gives sufficient conditions for bounding $P_7$. After bounding $P_1$, we are in a position to prove our theorem.

For any given $\varepsilon'>0$, let 
\begin{equation*} 
A = A(\eps') := 
\dfrac{ 2 \beta^2 ( 1 - \alpha)} n - \dfrac{  n \left ( c + \frac{ 4 \beta  (1 - \alpha)} n \right )^2 }{8 ( 1 - \alpha - \eps' )}.
\end{equation*}

\begin{lemma}
\label{lemma:orange}
Let $P_3$ and $P_4$ be as defined in Lemma \ref{lemma:red}. Define
\begin{align*}
\begin{split}
P_6 & := A e^{2u}  + e^u \left ( \dfrac{ 4 \beta (1 - \alpha) \phi}n + c \beta + c \phi \right ) + \dfrac{ 2 (1 - \alpha)} n \phi^2 + \phi_t,
\end{split} \\
 P_7 & := \dfrac{ 4 \alpha  (1- \alpha)} n \psi |\nabla u|^2  - 2 \nabla u \cdot \nabla \psi + \dfrac{ 2 \eps'} n \psi^2 - \Delta \psi. 
\end{align*}
For any $\varepsilon'>0$ and any $(x,t)$ we have
\[ P_3 + P_4 \geq P_6 + P_7. \]
\end{lemma}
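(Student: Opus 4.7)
The plan is to directly compute the difference $(P_3+P_4)-(P_6+P_7)$, show that the $|\nabla u|^2$, $\nabla u\cdot\nabla\psi$, and $\Delta\psi$ terms cancel entirely, and then recognize what remains as a perfect square. Let me abbreviate $B := c + \dfrac{4\beta(1-\alpha)}{n}$, so that $A = \dfrac{2\beta^2(1-\alpha)}{n} - \dfrac{n B^2}{8(1-\alpha-\varepsilon')}$.

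First I would handle $P_3 - P_6$. The $e^u$ terms, the $\phi^2$ term, and $\phi_t$ are identical in $P_3$ and $P_6$, so all that survives is the coefficient of $e^{2u}$, giving
\[
P_3 - P_6 = e^{2u}\left(\dfrac{2\beta^2(1-\alpha)}{n} - A\right) = \dfrac{n B^2}{8(1-\alpha-\varepsilon')}\, e^{2u}.
\]
Next I would compute $P_4 - P_7$. The terms $\dfrac{4\alpha(1-\alpha)}{n}\psi|\nabla u|^2$, $-2\nabla u\cdot\nabla\psi$, and $-\Delta\psi$ appear identically in both, so they drop out, and we are left with
\[
P_4 - P_7 = e^u \psi B + \dfrac{2(1-\alpha-\varepsilon')}{n}\psi^2.
\]

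Adding these two expressions gives
\[
(P_3+P_4) - (P_6+P_7) = \dfrac{n B^2}{8(1-\alpha-\varepsilon')}\, e^{2u} + e^u \psi B + \dfrac{2(1-\alpha-\varepsilon')}{n}\psi^2.
\]
Assuming $\varepsilon' < 1-\alpha$ (which is the implicit range making $A$ well-defined with the intended sign), both outer coefficients are nonnegative, and the middle coefficient satisfies $2\sqrt{\tfrac{n}{8(1-\alpha-\varepsilon')} \cdot \tfrac{2(1-\alpha-\varepsilon')}{n}} = 2\cdot\tfrac{1}{2} = 1$, which matches the middle coefficient $B$. Thus the right-hand side is exactly
\[
\left(\sqrt{\dfrac{n}{8(1-\alpha-\varepsilon')}}\, e^u B + \sqrt{\dfrac{2(1-\alpha-\varepsilon')}{n}}\, \psi\right)^2 \;\geq\; 0,
\]
which completes the proof.

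The only real obstacle is the accounting: verifying that the $|\nabla u|^2$, $\nabla u\cdot\nabla\psi$, and $\Delta\psi$ terms truly cancel between $P_4$ and $P_7$ (they do, by construction of $P_7$), and that the constant $A$ has been chosen precisely so the residual quadratic in $(e^u,\psi)$ degenerates into a perfect square. No analytic machinery is required — the lemma is an algebraic identity up to a nonnegative square, and the role of $\varepsilon'$ is simply to provide a positive residual coefficient $\tfrac{2(1-\alpha-\varepsilon')}{n}$ on $\psi^2$ in $P_7$, at the cost of a more negative constant inside $A$.
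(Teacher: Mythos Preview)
Your argument is correct and is essentially the same as the paper's: you compute the difference and complete the square, while the paper equivalently bounds $e^u\psi B + \tfrac{2(1-\alpha-\varepsilon')}{n}\psi^2 \ge -\tfrac{nB^2}{8(1-\alpha-\varepsilon')}e^{2u}$ via $(x+y)^2\ge 0$. Your explicit note that one needs $\varepsilon' < 1-\alpha$ is a useful clarification that the paper leaves implicit.
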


\begin{proof}[Proof of Lemma 6]
Recall that, 
\begin{equation*}
\begin{split}
P_3 + P_4  =&  \dfrac{ 2 \beta^2 ( 1 - \alpha)} n e^{2u} + e^u \left ( \dfrac{ 4 \beta ( 1- \alpha)} n \phi + c \phi + c \beta \right ) \\
& + \dfrac{ 2 ( 1- \alpha)} n \phi^2 + \phi_t
+ \dfrac{4 \alpha ( 1 - \alpha)}n \psi |\nabla u|^2 - 2 \nabla u \cdot \nabla \psi  \\
& - \Delta \psi+ e^u \psi \left ( c + \dfrac{ 4 \beta ( 1 - \alpha)} n \right ) + \dfrac{ 2 ( 1-  \alpha)} n \psi^2 .
\end{split}
\end{equation*}
We write the last two terms as
\begin{align*}
& e^u \psi \left ( c + \dfrac{ 4 \beta ( 1 - \alpha)} n \right ) + \dfrac{ 2 ( 1-  \alpha)} n \psi^2 \\
& = e^u \psi \left ( c + \dfrac{ 4 \beta ( 1 - \alpha)} n \right ) + \dfrac{ 2 ( 1-  \alpha - \eps')} n \psi^2 + \dfrac{ 2 \eps'}n \psi^2.
\end{align*}

Using $2xy+ x^2 \geq - y^2$ in the form
$$e^u \psi \left ( c + \dfrac{ 4 \beta ( 1 - \alpha)} n \right )  + \dfrac{ 2 ( 1-  \alpha - \eps')} n \psi^2\geq -\dfrac{ n \left ( c + \frac{ 4 \beta ( 1-  \alpha)} n \right)^2 }{ 8 ( 1 - \alpha - \eps')} e^{2u},$$

gives us
\begin{align*}
e^u \psi \left ( c + \dfrac{ 4 \beta ( 1 - \alpha)} n \right ) + \dfrac{ 2 ( 1-  \alpha)} n \psi^2 
& \geq \dfrac{ 2 \eps'} n \psi^2 - \dfrac{ n \left ( c + \frac{ 4 \beta ( 1-  \alpha)} n \right)^2 }{ 8 ( 1 - \alpha - \eps')} e^{2u}.
\end{align*}
Applying this inequality then gives
\begin{align*}
\begin{split}
P_3 + P_4 & \geq e^{2u} \left ( \dfrac{ 2 \beta^2 ( 1 - \alpha)} n - \dfrac{  n \left ( c + \frac{ 4 \beta  (1 - \alpha)} n \right )^2 }{8 ( 1 - \alpha - \eps' )} \right ) \\
& \qquad + e^u \left ( \dfrac{  4 \beta (1 - \alpha) \phi}n + c \beta + c \phi \right ) + \dfrac{ 2 (1 - \alpha)} n \phi^2 + \phi_t \\
& \qquad + \dfrac{ 4 \alpha  (1- \alpha)} n \psi | \nabla u|^2 - 2 \nabla u \cdot \nabla \psi + \dfrac{ 2 \eps'} n \psi^2 - \Delta \psi, 
\end{split} \\
& = P_6 + P_7,
\end{align*}
which finishes the proof.
\end{proof}

\begin{lemma}
\label{lemma:cyan}
For $\mu_1 = \dfrac{ \beta  c}{ 2 \sqrt A }$, $\nu_1 = \dfrac { \frac{ 4 \beta ( 1-  \alpha)} n + c }{ 2 \sqrt A}$, and $\omega_1 = \sqrt{ \dfrac{ 2 ( 1 - \alpha)} n }$, define $$P_8(\phi) := - ( \mu_1 + \nu_1 \phi)^2  +(\omega_1 \phi)^2 + \phi_t.$$
If $A > 0$, then  $P_8 > 0$ implies $P_6 > 0$ for any $(x,t)$.
\end{lemma}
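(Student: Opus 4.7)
The plan is to bound $P_6$ from below by $P_8$ via a straightforward completion-of-squares argument, mirroring the first half of the proof of Lemma \ref{lemma:green}. Writing $P_6$ as
\begin{equation*}
P_6 = A e^{2u} + e^u \left( \tfrac{4\beta(1-\alpha)\phi}{n} + c\beta + c\phi \right) + \tfrac{2(1-\alpha)}{n}\phi^2 + \phi_t,
\end{equation*}
the first two terms have the form $x^2 + 2xy$ with $x := \sqrt{A}\,e^u$ (here we crucially need $A > 0$ so that $\sqrt{A}$ is real) and
\begin{equation*}
y := \frac{1}{2\sqrt{A}}\left( \tfrac{4\beta(1-\alpha)\phi}{n} + c\beta + c\phi \right) = \frac{c\beta}{2\sqrt{A}} + \frac{\left(\tfrac{4\beta(1-\alpha)}{n}+c\right)\phi}{2\sqrt{A}} = \mu_1 + \nu_1 \phi,
\end{equation*}
where the last equality is just the definitions of $\mu_1$ and $\nu_1$ given in the statement.

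Applying the elementary inequality $x^2 + 2xy \geq -y^2$ then yields
\begin{equation*}
A e^{2u} + e^u \left( \tfrac{4\beta(1-\alpha)\phi}{n} + c\beta + c\phi \right) \geq -(\mu_1 + \nu_1 \phi)^2,
\end{equation*}
and substituting this lower bound into the expression for $P_6$, together with the identity $\tfrac{2(1-\alpha)}{n} = \omega_1^2$, gives
\begin{equation*}
P_6 \geq -(\mu_1 + \nu_1 \phi)^2 + (\omega_1 \phi)^2 + \phi_t = P_8(\phi).
\end{equation*}
Hence $P_8 > 0$ forces $P_6 > 0$, which is exactly what we want.

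There is essentially no obstacle here beyond the bookkeeping: the hypothesis $A > 0$ is precisely what is needed to legitimize the choice of $x = \sqrt{A}\,e^u$, and the definitions of $\mu_1$, $\nu_1$, $\omega_1$ have been engineered so that the completion-of-squares bound collapses exactly onto $P_8$. I do not expect to need the hypotheses (i)--(iii) of Theorem \ref{theorem:white} for this lemma; they will be invoked later when verifying $A > 0$ and when controlling $P_8$ through Lemma \ref{lemma:yellow} via Lemma \ref{lemma:magenta}.
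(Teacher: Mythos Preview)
Your proof is correct and is essentially identical to the paper's own argument: both complete the square on the $e^u$-terms using $A>0$, yielding $P_6 \geq -(\mu_1+\nu_1\phi)^2 + (\omega_1\phi)^2 + \phi_t = P_8$. The only cosmetic difference is that the paper phrases the elementary inequality as $x^2 + xy \geq -\tfrac14 y^2$ rather than $x^2 + 2xy \geq -y^2$, which amounts to the same bound after re-scaling $y$.
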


\begin{proof}[Proof of Lemma 7]
Recall that
\[ P_6 = A e^{2u} + \left[ \left  (\dfrac{ 4 \beta ( 1- \alpha)}n + c \right ) \phi + \beta c \right] e^u + \dfrac{ 2  ( 1- \alpha)} n \phi^2 + \phi_t.\]
Since $A > 0$, we use the fact that $x^2 + xy \geq -\frac{1}{4} y^2$ in the form
\[A e^{2u} + \left [ \left  (\dfrac{ 4 \beta ( 1- \alpha)}n + c \right ) \phi + \beta c \right ] e^u \geq - \dfrac{  \left [ \left (  \dfrac{ 4 \beta ( 1 - \alpha) }n + c \right ) \phi + \beta c \right ]^2 }{ 4 A}. \]
This gives that
\begin{align*}
 P_6 & \geq - \dfrac{  \left [ \left (  \dfrac{ 4 \beta ( 1 - \alpha) }n + c \right ) \phi + \beta c \right ]^2 }{ 4 A} + \dfrac{ 2 ( 1- \alpha)} n \phi^2 + \phi_t \\
& = - \left [\dfrac{ \beta c}{ 2 \sqrt A}  +  \dfrac 1 {2 \sqrt A} \left( \dfrac{4 \beta ( 1 - \alpha)} n + c \right )  \phi \right ]^2 + \left ( \phi \sqrt{ \dfrac{ 2 ( 1- \alpha)} n} \right )^2 + \phi_t
\end{align*}
as desired.
\end{proof}

\begin{lemma}
\label{lemma:magenta}
If condition (iii) of Theorem \ref{theorem:white} holds, then there always exists some $\eps' > 0$ such that $A > 0$ and $\nu_1^2 < \omega_1^2$.
\end{lemma}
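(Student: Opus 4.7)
The plan is to exploit the fact that both $A(\eps')$ and $\nu_1(\eps')^2$ depend continuously on $\eps'$ over $[0, 1-\alpha)$ (the denominator $1-\alpha-\eps'$ in $A$ stays away from zero there), so it suffices to establish both desired strict inequalities at $\eps' = 0$; by continuity they will then persist for all sufficiently small $\eps' > 0$.

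To handle $A(0) > 0$, I will introduce the shorthand $r := \frac{4\beta(1-\alpha)}{n}$, so that $\frac{2\beta^2(1-\alpha)}{n} = \frac{r^2 n}{8(1-\alpha)}$, and combine the two terms in $A(0)$ over the common denominator $8(1-\alpha)$. The numerator factors as a difference of squares:
\[A(0) \;=\; \frac{n\bigl(r^2 - (c+r)^2\bigr)}{8(1-\alpha)} \;=\; -\frac{nc(c+2r)}{8(1-\alpha)}.\]
Since $c > 0$, positivity of $A(0)$ is equivalent to $\beta < -\frac{cn}{8(1-\alpha)}$, and this is strictly implied by the upper bound $\beta < -\frac{cn(2-\sqrt{2})}{4(1-\alpha)}$ from (iii), because $\frac{2-\sqrt{2}}{4} > \frac{1}{8}$.

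For the second condition, using $\nu_1 = \frac{c+r}{2\sqrt{A}}$ and $\omega_1^2 = \frac{2(1-\alpha)}{n}$, multiplying $\nu_1(0)^2 < \omega_1^2$ by the positive quantity $8A(0)(1-\alpha)$ and substituting the closed form for $A(0)$ reduces the inequality to $r^2 + 4cr + 2c^2 < 0$. The roots of this quadratic in $r$ are $r = c(-2 \pm \sqrt{2})$, so the inequality holds precisely when $r \in \bigl(-c(2+\sqrt{2}),\ -c(2-\sqrt{2})\bigr)$, which via $r = \frac{4\beta(1-\alpha)}{n}$ is exactly the range in (iii). With both strict inequalities at $\eps' = 0$ in hand, continuity furnishes the required $\eps' > 0$. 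The only real obstacle is the bookkeeping of the substitution $r = \frac{4\beta(1-\alpha)}{n}$; once it is made, the constants $2 \pm \sqrt{2}$ in (iii) appear naturally as the roots of $r^2 + 4cr + 2c^2$, which explains why (iii) is precisely the hypothesis one needs.
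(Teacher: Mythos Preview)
Your proof is correct and follows essentially the same strategy as the paper: verify the two strict inequalities at $\eps' = 0$ and invoke continuity. Your substitution $r := \tfrac{4\beta(1-\alpha)}{n}$ streamlines the algebra somewhat---it makes the factorisation $A(0) = -\tfrac{nc(c+2r)}{8(1-\alpha)}$ and the quadratic $r^2 + 4cr + 2c^2$ appear immediately, whereas the paper works with the variable $\tfrac{cn}{\beta(1-\alpha)}$ and arrives at the equivalent inequalities in a slightly more roundabout way---but the underlying argument is the same.
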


\begin{proof}[Proof of Lemma 8]
We first want to show that $A(\varepsilon') > 0$ for some $\varepsilon' > 0$. We will show that $A(0) > 0$, and since $A$ is a continuous function of $\varepsilon'$, this implies that $A(\varepsilon') > 0$ for some $\varepsilon' > 0$.

We have that
\begin{align*}
	A(0) & = \frac{2 \beta^2 (1-\alpha)}{n} - \frac{n\left(c + \frac{4\beta (1-\alpha)}{n}\right)^2}{8(1-\alpha -0)} \\
		& = \frac{16 \beta^2 (1-\alpha)^2 - (cn + 4 \beta (1-\alpha))^2}{8 n (1-\alpha)} \\		& = \frac{-c^2n^2 - 8 \beta c n (1-\alpha)}{8n(1-\alpha)}.
\end{align*}
	It follows from (iii) that
 \[-8 < -4-2\sqrt{2} < \frac{cn}{\beta(1-\alpha)},\]
	which rearranges to give $c^2 n^2 + 8 \beta c n (1-\alpha)< 0$. Thus $A(0) > 0$, and so there exists some $\varepsilon' > 0$ such that $A(\varepsilon') > 0$.

\vspace{12pt}
Next we want to show that $\nu_1^2 < \omega_1^2$ for some $\varepsilon' > 0$, where 
 	\begin{align*}
 		\nu_1 & = \dfrac { \frac{ 4 \beta ( 1-  \alpha)} n + c }{ 2 \sqrt A}, \\
 		\omega_1 & = \sqrt{\frac{2(1-\alpha)}{n}}.
 	\end{align*}
 	Since $\nu_1$ and $\omega_1$ are continuous functions of $\varepsilon'$, if we can show that $\nu_1^2 < \omega_1^2$ for $\varepsilon' = 0$, then it must be that $\nu_1^2 < \omega_1^2$ for some $\varepsilon' > 0$.
 	
 	When $\varepsilon'=0$, $\nu_1^2 < \omega_1^2$ is equivalent to
	 	\[ \frac{c^2n^2+8\beta c n (1-\alpha) + 16(1-\alpha)^2 \beta^2}{-c^2n^2-8\beta c n (1-\alpha)} < 1. \]
	Restriction (iii) implies 
	\[ -4-2\sqrt{2} < \frac{cn}{\beta(1-\alpha)} < -4 + 2\sqrt{2},\]
	which leads to
	\[ \frac{c^2n^2}{\beta^2(1-\alpha)^2} + \frac{8 cn}{\beta(1-\alpha)} + 8 < 0.\]
	This is equivalent to
	\[c^2n^2 + 8 \beta c n (1-\alpha) + 16(1-\alpha)^2 \beta^2  < - (c^2 n^2 + 8 \beta c n (1-\alpha)) \]
	and therefore $\nu_1^2 < \omega_1^2$ for $\varepsilon'=0$.
	\end{proof}

\begin{lemma}
\label{lemma:brown}
Suppose $R \geq 1$ is a constant and $\rho: M^n \rightarrow \mathbb R$ is a function that satisfies 
\begin{equation*}
\rho(x) \geq 0, \qquad |\nabla \rho(x)| \leq 1, \qquad \Delta \rho \leq \dfrac{ c_1} {\rho},
\end{equation*}
for some constant $c_1 > 0$. Define
\begin{equation}
\label{eq:metroid}
\psi(x) := k \dfrac{ R^2 + \rho^2}{(R^2 - \rho^2)^2},
\end{equation}
then for $k$ sufficiently large, $\psi(x)$ satisfies $P_7 > 0$.
\end{lemma}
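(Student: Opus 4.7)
The plan is to prove $P_7 > 0$ by first eliminating the indefinite cross term $-2\nabla u \cdot \nabla \psi$, and then exploiting that the surviving positive contribution $\frac{2\eps'}{n}\psi^2$ scales as $k^2$ while the surviving negative contributions only scale as $k$. First I would apply the Young-type inequality $2|\nabla u||\nabla\psi| \le \frac{4\alpha(1-\alpha)}{n}\psi|\nabla u|^2 + \frac{n}{4\alpha(1-\alpha)}\frac{|\nabla\psi|^2}{\psi}$ to cancel the $|\nabla u|^2$ contribution in $P_7$. Since $\psi\ge 0$, this reduces the problem to the pointwise estimate
\[ P_7 \;\ge\; \frac{2\eps'}{n}\psi^2 \;-\; \Delta\psi \;-\; \frac{n}{4\alpha(1-\alpha)}\frac{|\nabla\psi|^2}{\psi}, \]
which no longer involves $u$.

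Next I would substitute $\psi = kF(\rho)$ with $F(\rho) = (R^2+\rho^2)/(R^2-\rho^2)^2$. Using the chain rule together with the hypotheses $|\nabla\rho|\le 1$ and $\Delta\rho \le c_1/\rho$, and the easily checked fact that $F'(\rho)\ge 0$ for $\rho\ge 0$, one gets linear-in-$k$ upper bounds
\[ \Delta\psi \;\le\; k\bigl(F''(\rho) + c_1\, F'(\rho)/\rho\bigr), \qquad \frac{|\nabla\psi|^2}{\psi} \;\le\; k\,\frac{F'(\rho)^2}{F(\rho)}. \]
The lower bound on $P_7$ thus takes the shape $k^2 A(\rho) - k B(\rho)$ with $A(\rho) := \frac{2\eps'}{n}F(\rho)^2 > 0$. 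It then suffices to show that $B(\rho)/A(\rho)$ is uniformly bounded on $\{0 \le \rho < R\}$, since taking $k$ larger than the supremum of this ratio forces $P_7 > 0$.

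This reduces to an explicit singularity analysis. Computing $F'(\rho) = 2\rho(3R^2+\rho^2)/(R^2-\rho^2)^3$ and $F''(\rho) = 6(R^4+6R^2\rho^2+\rho^4)/(R^2-\rho^2)^4$, each of the three ratios $F''(\rho)/F(\rho)^2$, $(F'(\rho)/\rho)/F(\rho)^2$, and $F'(\rho)^2/F(\rho)^3$ is a rational function whose numerator is a polynomial in $\rho^2, R^2$ and whose denominator has just enough power of $(R^2-\rho^2)$ to cancel the singularity as $\rho\uparrow R$. The main obstacle is this final bookkeeping: both $\psi^2$ and the leading piece of $\Delta\psi$ (coming from $F''$) blow up at the same rate $(R^2-\rho^2)^{-4}$, so positivity of $P_7$ depends on the algebraic cancellation in the ratios producing a bound depending only on $R$, $c_1$, $\alpha$, $n$, $\eps'$ (with $R\ge 1$ being used to absorb the $F'/\rho$ term, which carries a lower-order singularity). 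Once those ratios are uniformly bounded by some constant $C$, choosing $k > nC/(2\eps')$ yields $P_7 > 0$ on the entire region $\{\rho < R\}$, completing the proof.
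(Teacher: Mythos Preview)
Your proposal is correct and follows essentially the same approach as the paper: both complete the square to obtain $P_7 \ge \frac{2\eps'}{n}\psi^2 - \Delta\psi - \frac{n}{4\alpha(1-\alpha)}\frac{|\nabla\psi|^2}{\psi}$, then exploit the scaling $\psi = k\Psi$ together with the pointwise bounds $|\nabla\Psi|^2 \le 18\,\Psi^3$ and $\Delta\Psi \le (6c_1+18)\,\Psi^2$ to conclude for $k$ large. The paper packages the singularity analysis directly as these two clean inequalities rather than as boundedness of the ratios $F''/F^2$, $(F'/\rho)/F^2$, $F'^2/F^3$, but the content is identical; note also that the hypothesis $R\ge 1$ is not actually needed for these bounds, since $(R^2-\rho^2)/(R^2+\rho^2)\le 1$ already controls the lower-order term.
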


\begin{proof}[Proof of Lemma 9]
Let
\[ \Psi(x) := \dfrac{ R^2  +\rho
^2}{(R^2 - \rho^2)^2}, \]
so that $\psi = k \Psi$. We claim that $\Psi$ satisfies
\begin{equation}
\label{eq:tetris}
|\nabla \Psi|^2 \leq 18 \Psi^3, \qquad \Delta \Psi \leq c_2 \Psi^2,
\end{equation}
where $c_2$ depends only on $c_1$.

Indeed, we can compute
\begin{align*}
\nabla \Psi  &= \nabla \rho \left ( \dfrac{ 6 \rho R^2 + 2 \rho^3 }{ (R^2 - \rho^2)^3} \right ), \\
|\nabla \Psi|^2 & \leq 4 \rho^2 \dfrac{ ( 3 R^2 + \rho^2)^2}{(R^2 - \rho^2)^6} \leq  18 \Psi^3,\\
\end{align*}
and
\begin{align*}
\Delta \Psi & = \Delta \rho \left ( \dfrac{ 6 \rho R^2 + 2 \rho^3 }{(R^2 - \rho^2)^3} \right ) + |\nabla \rho|^2 \left ( \dfrac{6 R^4 + 36 \rho^2 R^2 + 6 \rho^4}{(R^2 - \rho^2)^4} \right ) \\
& \leq 6 c_1 \dfrac{ R^2 + \rho^2}{(R^2 - \rho^2)^3} + 18 \dfrac{ (R^2 + \rho^2)^2}{(R^2 -\rho^2)^4} \\
& \leq (6 c_1 + 18 ) \Psi^2.
\end{align*}

Recall that
\[
P_7 = \dfrac{ 4 \alpha ( 1- \alpha)} n \psi |\nabla u|^2 - 2 \nabla u \cdot \nabla \psi + \dfrac{ 2 \eps' }n \psi^2 - \Delta \psi. \]
Completing the square gives us
\[
P_7 \geq \dfrac{ 2 \eps'}n \psi^2 - \Delta \psi - \dfrac{ n }{ 4 \alpha ( 1 - \alpha) \psi} |\nabla \psi|^2.
\] 
By \eqref{eq:tetris}, we know that:
\begin{gather*}
\dfrac{ \eps'}n k^2 \Psi^2 \geq \dfrac{ \eps' k}{c_2 n} k \Delta \Psi, \\
\dfrac{ \eps' }n k^2 \Psi^2 \geq \dfrac{ \eps' k}{18 n} \cdot \dfrac{ k^2 |\nabla \Psi|^2}{k \Psi},
\end{gather*}
so if 
\begin{equation*}
k >  \max \left ( \dfrac{c_2 n}{\eps'}, \dfrac{ 18 n^2 }{ 4 \alpha ( 1 - \alpha) \eps' }
\right ) ,
\end{equation*}
then it immediately leads to that $P_7 > 0$.

\end{proof}

\subsection{Proof of Theorem \ref{theorem:white}} We are now ready to prove  our Theorem \ref{theorem:white}. 
\begin{proof} Fix a point $p \in M$, let $r= r(x) := d(x,p)$, where $d(\cdot, \cdot)$ denotes the geodesic distance in $M$. 
We define the Harnack quantity $h$ on the geodesic ball $B_R(p):= \{ x \in M \mid d(x,p) < R \}$. $h$ depends on the positive constants  $\eps, \eps', k$,  $R$ and is defined as follows:
\begin{align*}
h(x,t) & = \Delta u + \alpha |\nabla u|^2 + \beta e^u + \phi(t) + \psi(x), \\
\phi &= \phi(t) : = \dfrac{ \mu_2 \left ( \dfrac 1 { \nu_2 - (\omega_2 - \eps)} e^{2 \mu_2 (\omega_2 - \eps) t}  - \dfrac 1 { \nu_2 + ( \omega_2 - \eps)} \right )}{ 1 - e^{ 2 \mu_2 ( \omega_2 - \eps)t }}, \\
\psi &=  \psi(x) := k \dfrac{R^2 + r^2}{(R^2 - r^2)^2},
\end{align*}
with $\mu_2, \nu_2, \omega_2,$ and $A$ defined as in Lemma \ref{lemma:cyan} and the beginning of Section 3.2. Fix $R> 1$. Let $\varepsilon, \varepsilon'$ and $k$ be positive constants to be chosen later. Note that $h$ is $C^1$ in $t$ and $C^2$ in $x$, except possibly for those $x$  in the cut locus $\mathcal C(p)$. We will show that we can choose $\eps, \eps'$, and $k$ so that $h(x,t) > 0$ for all $x, t$. Assume for the sake of a contradiction that $h(x,t) \leq 0$ for some $x,t$.

Let $t_1$ be the first time $t$ such that  $\inf_{x \in B_R(p)} h(x,t) = 0$. Since $\lim_{t \rightarrow 0^+} h(t) = \infty$ by Lemma \ref{lemma:yellow},  it follows that $t_1$ exists.
Note also that $\psi(x) \rightarrow \infty$ as $r=d(x, p)$ approaches $R$, so the infimum of $h$ is attained inside $B_R(p)$; let $(x_1, t_1)$ be such a point, so that $h(x_1,t_1) = 0$. Now we split into cases based on whether  or not $x_1$ is in the cut locus $\mc C (p)$.

Case 1: Suppose that $x_1 \notin \mathcal C(p)$, so that $\psi(x)$ is twice differentiable at $x_1$. Then by Lemmas \ref{lemma:red} and  \ref{lemma:blue} (from subsection 2.2) and \ref{lemma:orange} (subsection 3.2) we have that
\begin{equation*}
 0 > h_t - \Delta h - 2 \nabla h \cdot \nabla u - P_1 h \geq P_2 + P_3 + P_4 \geq P_6 + P_7. 
\end{equation*}
By Lemma \ref{lemma:magenta}, we can choose $\eps' > 0$ small enough such that $A > 0$ and $\nu^2 < \omega^2$; then, since $\phi$ is the same as the one defined as in Lemma \ref{lemma:yellow}, it follows by Lemmas \ref{lemma:yellow} and \ref{lemma:cyan} that we can choose $\varepsilon$ small enough so that $P_6 > 0$.

Note that $\psi$ takes the form of \eqref{eq:metroid}, with the distance function $\rho(x) = r(x) = d(x,p)$. We have that $r \geq 0$ and $|\nabla r|^2 = 1 $; furthermore, by the Laplacian comparison theorem  we have that $\Delta r \leq \dfrac{n-1}r$. Thus we can apply Lemma \ref{lemma:brown} and choose $k$ sufficiently large such that $P_7 > 0$ as well, which leads to a contradiction.

Case 2: Suppose that $x_1 \in \mathcal C(p)$. We apply Calabi's trick. Let $ \delta \in \left ( 0 , \frac{d(x_1, p)}{2} \right )$ be a positive constant, and let $\gamma(t)$ be any length-minimizing geodesic from $p$ to $x_1$. Define $p_\delta := \gamma(\delta)$, so that $ x_1 \notin \mathcal C(p_\delta)$, and define
\begin{gather*}
r_\delta(x) := d(x, p_\delta) + \delta, \\
\psi_\delta(x) := k \dfrac{ R^2 + r_\delta^2}{(R^2 - r_\delta)^2}, \\
h_\delta := \Delta u + \alpha |\nabla u|^2 + \beta e^u + \phi + \psi_\delta.
\end{gather*}
Note that by the triangle inequality,
\[ r_\delta (x)= d(x,p_\delta) + d(p_\delta, p) \geq r(x), \]
with equality at $x = x_1$.  Since $\psi$ is an increasing function of $r$, it follows that $\psi_\delta(x) \geq \psi(x)$ with equality at $x=x_1$. This implies that $(x_1, t_1)$ is still the first time and place where $h_\delta(x,t) = 0$. Furthermore, $h_\delta$ is now $C^2$ at $(x_1, t_1)$ so applying Lemmas \ref{lemma:red}, \ref{lemma:blue}, \ref{lemma:orange}, \ref{lemma:yellow}, and \ref{lemma:cyan} gives that
\[ 0 > P_7. \]
Note that clearly $r_\delta \geq 0$ and $|\nabla r_\delta| \leq 1$, and at $x_1$ we get
\[ \Delta r_\delta = \Delta (d(x_1, p_\delta)) \leq \dfrac{n-1}{d(x_1, p_\delta)} = \dfrac{n-1}{r(x_1) - \delta} \leq \dfrac{2(n-1)}{r(x_1)}, \]
since we assumed that $ \delta \leq \frac{ r(x_1)}{2}$. Therefore applying Lemma \ref{lemma:brown} gets us a contradiction in this case as well.

This shows   that $h(x,t) > 0$ for all $x,t$. Since $h$ varies continuously as a function of $R, \eps, \eps'$, we can take the limit $R \rightarrow \infty$ to get $\psi \rightarrow 0$. Then by taking $\eps, \eps' \rightarrow 0$, we get that $\phi \to \phi_1$ and so
\[ \Delta u + \alpha |\nabla u|^2  + \beta e^u + \dfrac{ \mu_2 \left ( \frac 1 { \nu_2 - \omega_2} e^{ 2 \mu_2 \omega_2 t } - \frac 1 { \mu_2 + \omega_2 } \right )}{ 1 - e^{2 \mu_2 \omega_2 t}} \geq 0, \]
with
\begin{align*}
\mu_2 & =  \beta c \sqrt{ \frac{ 2 ( 1- \alpha)} {c ( - cn - 8 \beta ( 1- \alpha))} }, \\
\nu_2 & = \left ( \frac{ 4 \beta ( 1- \alpha)} n + c \right ) \cdot \sqrt { \frac{ 2  ( 1- \alpha)} { c ( - cn  - 8 \beta ( 1- \alpha))} }, \\
\omega_2 & =  \sqrt{  \frac { 2 ( 1 -\alpha)} n },
\end{align*}
which finishes the proof.
\end{proof}

%%%%%%%%%%%%%%%%%%%%%%%%%%%
\section{Applications}%%%%%%%%%%%%%%%%%
%%%%%%%%%%%%%%%%%%%%%%%%%%%
In this section, we  shall derive two applications of our differential Harnack Estimates.

\subsection{Bounds on the Wave Speed of Traveling Wave Solutions}

The first such application shows that our Harnack inequality can be used to prove an interesting fact about traveling wave solutions to Fisher's equation. In particular we look at traveling plane waves, i.e. solutions to \eqref{eq:fisher} of the form
\[ f(x,t) = v(z) := v(x + \eta t \hat a), \]
for some function $v: \mathbb R^n \rightarrow \mathbb R$ and some wave direction $\hat a \in \mathbb R^n, |\hat a| = 1$ and wave speed $\eta > 0$. For $n=1$, these solutions were first studied by Fisher in \cite{fisher37} (also see \cite{kpp37, sherratt98}) and were considered by him to be a natural model for propagation of mutations. He was able to show that if $n=1$ and $\displaystyle \lim_{t \rightarrow - \infty} f(x,t) = 0$, then it must be that $\eta \geq 2 \sqrt c$.

We will show a weaker bound that generalizes to higher dimensions.
\begin{theorem}
\label{theorem:emerald}
Let $f(x,t) = v(x + \eta t \hat a)$ be a traveling plane wave solution to \eqref{eq:fisher}, with wave speed $\eta$ and wave direction $\hat a$. Suppose that
\begin{equation}
\label{eq:dk}
\lim_{\substack{ x = k \hat b, \\ k \rightarrow \infty}} v(x) = 0 \quad \text{ for some direction }  \hat b \in \mathbb R^n, \  |\hat b | \neq 0.
\end{equation}

Then
\[ \eta \geq \begin{cases} \sqrt{ ( 3 - \sqrt 3) c }, & n = 1, \\ \sqrt{2c}, & n =2, \\ \sqrt{ ( 7 - 3 \sqrt 3) c }, & n = 3. \end{cases} \]
\end{theorem}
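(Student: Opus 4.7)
The plan is to apply the complete--noncompact Harnack estimate of Theorem \ref{theorem:white} to a traveling plane wave $f(x,t)=v(x+\eta t\hat a)$, push the space variable to infinity along $\hat b$ where $v$ decays, and finally optimize the resulting scalar inequality over the parameters $\alpha,\beta$. Since $u(x,t)=\log v(z)$ with $z=x+\eta t\hat a$ depends on $(x,t)$ only through $z$, the quantities $\Delta u$, $|\nabla u|^2$, and $e^u$ are constant along each characteristic $\{(x,t):x+\eta t\hat a=z\}$. Fixing $z$ and letting $t\to\infty$ in
\[
\Delta u+\alpha|\nabla u|^2+\beta e^u+\phi_1(t)\ge 0
\]
replaces $\phi_1(t)$ by its (finite) limit $\phi_1(\infty)$, which may be read off the explicit formula.

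Next I would eliminate $\Delta u$ using the equation. The ansatz $f=v(z)$ turns Fisher's equation into $\eta\hat a\cdot\nabla_z v=\Delta_z v+cv(1-v)$, and with $U=\log v$ this becomes $\Delta U=\eta\hat a\cdot\nabla U-|\nabla U|^2-c(1-v)$. Substituting into the limiting Harnack inequality yields
\[
\eta\hat a\cdot\nabla U+(\alpha-1)|\nabla U|^2+(\beta+c)v-c+\phi_1(\infty)\ge 0.
\]
The map $w\mapsto\eta\hat a\cdot w+(\alpha-1)|w|^2$ is a concave quadratic on $\R^n$ with unconstrained maximum $\eta^2/[4(1-\alpha)]$, attained at $w=\frac{\eta}{2(1-\alpha)}\hat a$. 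Sending $z\to\infty$ along $\hat b$ so that $v(z)\to 0$ by hypothesis, and using this pointwise bound on the left-hand side, produces the core inequality
\[
\eta^2\ge 4(1-\alpha)\bigl(c-\phi_1(\infty)\bigr).
\]

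It remains to optimize over $(\alpha,\beta)$ subject to conditions (i)--(iii) of Theorem \ref{theorem:white}. Parametrizing $\beta=-cn\gamma/[4(1-\alpha)]$ is convenient; a direct simplification of the formulas for $\mu_2,\nu_2,\omega_2$ gives $\phi_1(\infty)=-\mu_2/(\nu_2+\omega_2)$, which reduces (after cancellation) to
\[
\phi_1(\infty)=\frac{cn\gamma}{4(1-\alpha)\bigl(1-\gamma+\sqrt{2\gamma-1}\bigr)},
\]
so the bound becomes
\[
\frac{\eta^2}{c}\ge 4(1-\alpha)-\frac{n\gamma}{1-\gamma+\sqrt{2\gamma-1}}.
\]
The right-hand side is strictly decreasing in $\alpha$, so I would take $\alpha\to 0^+$. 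A short derivative computation shows that $F(\gamma):=\gamma/(1-\gamma+\sqrt{2\gamma-1})$ is strictly increasing on $(2-\sqrt 2,\infty)$, so I would take $\gamma$ at its admissible lower bound. At $\alpha=0$, constraint (ii) collapses to $\gamma>2/n$ and constraint (iii) to $2-\sqrt 2<\gamma<2+\sqrt 2$; for $n=1,2,3$ the binding constraint is (ii), giving $\gamma\to 2^+,\,1^+,\,(2/3)^+$. Evaluating $F$ at these boundary values yields $\sqrt 3+1$, $1$, $\sqrt 3-1$, which when substituted give $\eta^2/c\ge 3-\sqrt 3,\,2,\,7-3\sqrt 3$ respectively, as claimed.

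The conceptual steps are clean: (1) the Harnack bound, (2) substitution via the PDE, (3) asymptotic vanishing of $v$ along $\hat b$, and (4) maximization of a concave quadratic in $\nabla U$. The main obstacle is the bookkeeping required to simplify $\phi_1(\infty)$ into the closed form above and to verify the monotonicity of $F$ on the feasible interval; one must also check at the end that each optimal $\gamma\in\{2,1,2/3\}$ indeed lies in $(2-\sqrt 2,2+\sqrt 2)$ so constraint (iii) is not violated in the limit.
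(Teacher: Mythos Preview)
Your proof is correct and follows essentially the same route as the paper's: apply Theorem \ref{theorem:white}, eliminate $\Delta u$ via the traveling-wave ODE, bound the resulting quadratic in $\nabla U$, send $t\to\infty$ and $v\to 0$, and then optimize over $(\alpha,\beta)$. Your reparametrization $\beta=-cn\gamma/[4(1-\alpha)]$ and the monotonicity of $F(\gamma)$ make the final optimization cleaner than the paper's direct evaluation at $\alpha\to 0$, $\beta=-cn(1+\alpha)/(4\alpha^2-4\alpha+2n)$, but the two computations are equivalent and land at the same limiting values.
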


\begin{remark}
When $n = 1$, $\eta \geq 2\sqrt{c}$ is both a necessary and sufficient condition for the existence of traveling wave solutions. The same condition is sufficient in any higher dimension, but it is not known (at least to us) if it is necessary as well. Our bounds above give a weaker necessary wave speed in dimension two and three.

\end{remark}

\begin{remark}
In the proof below we have not used the fact that the traveling wave $v$ approaches $1$ in some direction. Although we were ourselves unsuccessful, the authors would like to encourage an attempt to use this additional restriction to obtain a better bound on the wave speed $\eta$.

\end{remark}

\begin{lemma}
\label{lemma:silver}
For any function $v(z)$ and any $\eta$ that satisfy the conditions of Theorem \ref{theorem:emerald}, and for any $\alpha, \beta$ that satisfy (i), (ii), and (iii) as in Theorem \ref{theorem:white}, we have 
\begin{equation*}
\eta^2 \geq M' := 4 ( 1- \alpha)\left [ (c  - \phi(t) ) - (\beta + c) v(z) \right ] ,
\end{equation*}
for all $x,t$, where $\phi(t)= \frac{\mu \left( \frac{1}{\nu - \omega} e^{2 \mu \omega t} - \frac{1}{\nu + \omega} \right)}{\left(1 - e^{2 \mu \omega t} \right)} $ (which appears as $\phi_1(t)$ in the statement of Theorem \ref{theorem:white}).
\end{lemma}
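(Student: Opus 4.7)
The plan is to feed the traveling wave ansatz into the complete noncompact Harnack inequality from Theorem \ref{theorem:white} and then optimize in the gradient direction. Since $\R^n$ is complete with zero Ricci curvature and $v>0$ is a positive solution by hypothesis, Theorem \ref{theorem:white} applies to $f(x,t) = v(x+\eta t \hat a)$, so with $u = \log f = \log v$ we have
\begin{equation*}
\Delta u + \alpha |\nabla u|^2 + \beta e^u + \phi(t) \geq 0.
\end{equation*}

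Next I would rewrite this inequality in a form that exposes $\eta$. Using $\Delta u = \Delta v / v - |\nabla v|^2/v^2$ and $|\nabla u|^2 = |\nabla v|^2/v^2$, together with Fisher's equation applied to the traveling wave, which gives $\Delta v = f_t - cv(1-v) = \eta \hat a \cdot \nabla v - cv(1-v)$, one obtains
\begin{equation*}
\frac{\Delta v}{v} = \eta\, \hat a \cdot \nabla u - c + cv.
\end{equation*}
Substituting this in and using $e^u = v$, the Harnack inequality collapses to
\begin{equation*}
\eta\, \hat a \cdot \nabla u \;-\; (1-\alpha)|\nabla u|^2 \;\geq\; c \;-\; \phi(t) \;-\; (c+\beta)\, v.
\end{equation*}

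The final step is to maximize the left-hand side over $\nabla u$ at each fixed point. Viewing the left side as a quadratic form in the vector $\nabla u$, completing the square (or Cauchy--Schwarz with the weighted AM-GM $\eta\, \hat a \cdot \nabla u \leq (1-\alpha)|\nabla u|^2 + \tfrac{\eta^2}{4(1-\alpha)}$, valid since $|\hat a|=1$ and $1-\alpha>0$) gives
\begin{equation*}
\frac{\eta^2}{4(1-\alpha)} \;\geq\; \eta\, \hat a \cdot \nabla u - (1-\alpha)|\nabla u|^2 \;\geq\; (c-\phi(t)) - (c+\beta)v(z),
\end{equation*}
which rearranges to the claimed inequality $\eta^2 \geq 4(1-\alpha)[(c-\phi(t)) - (\beta+c) v(z)] = M'$.

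I do not anticipate a serious obstacle here: the only nontrivial ingredient is Theorem \ref{theorem:white}, which has already been established, and the rest is algebraic manipulation combined with completing the square. The decay condition \eqref{eq:dk} is not needed for this lemma itself (it will presumably be used in Theorem \ref{theorem:emerald} to pass to a limit $v \to 0$ along the direction $\hat b$), so it plays no role in the proof of the lemma beyond guaranteeing that the Harnack estimate can be applied to the given positive solution.
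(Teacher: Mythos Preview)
Your proof is correct and follows essentially the same route as the paper: apply Theorem \ref{theorem:white} to $u=\log v$, substitute $\Delta v = \eta\,\hat a\cdot\nabla v - cv(1-v)$ from the traveling-wave form of Fisher's equation, and then complete the square in $\nabla u$ to extract the bound on $\eta^2$. The only cosmetic difference is that the paper first rotates so that $\hat a=\hat x_1$ and then discards the nonnegative term $(1-\alpha)\sum_{i\ge 2}(\partial_i v)^2/v^2$, whereas you keep things coordinate-free via $|\hat a|=1$; your observation that condition \eqref{eq:dk} is not used in this lemma is also accurate.
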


\begin{proof}
Since Fisher's equation is spherically symmetric, we may assume without loss of generality that $\hat a = \hat x_1 = (1,0,0, \ldots, 0)$. Therefore
\[ f(x,t) = v(x_1  + \eta t, x_2, \ldots, x_n)= v(z_1,z_2,\ldots,z_n)=v(\hat z).\]
It then follows from \eqref{eq:fisher} that $\left( \text{where } \partial_i := \frac{\partial}{\partial z_i} \right)$
\[ \eta \partial_1 v = \Delta v + c v ( 1 - v). \]

Combining this with Theorem \ref{theorem:white} gives that:
\begin{gather*}
\Delta ( \log v ) + \alpha |\nabla (\log v)|^2 + \beta v + \phi \geq 0; \\
\dfrac{ \Delta v}{v} - ( 1 - \alpha ) \dfrac{ |\nabla v|^2 }{ v^2 } + \beta v  + \phi \geq 0; \\
\dfrac{ \eta \partial_1  v-  c v ( 1- v) }{ v} -  ( 1- \alpha ) \dfrac{ |\nabla v |^2 } {v^2 } + \beta v + \phi \geq 0; \\
( 1 -\alpha) \dfrac{ \sum_{i=2}^n (\partial_i v)^2 }{v^2} + ( 1- \alpha) \dfrac{ (\partial_1 v)^2 }{v^2 } - \eta \dfrac{ \partial_1 v}{v } - ( \beta + c ) v +  (c - \phi ) \leq 0.
\end{gather*}
It follows from standard Cauchy-Schwarz that
\[ - \dfrac{ \eta^2}{ 4 ( 1- \alpha)} -  ( \beta + c ) v + ( c -\phi) \leq 0, \]
hence
\[ \eta^2 \geq 4 ( 1- \alpha ) [ ( c - \phi) - ( \beta + c )v ], \]
as desired.
\end{proof}

\begin{lemma}
\label{lemma:crystal}
Assume that $v(x) \to 0$ along some path, as in \eqref{eq:dk}. Then for any $\eps_3 > 0$ there exists $(x_3, t_3)$, possibly depending on $n, \alpha, \beta$, and $c$, such that at $(x_3, t_3)$
\begin{equation*}
M' > M'' - \dfrac{\eps_3}3,
\end{equation*} 
where
\begin{equation*}
M'' := 4( 1- \alpha) \left (  c - \dfrac{ - \mu }{ \nu + \omega} \right ).
\end{equation*}
\end{lemma}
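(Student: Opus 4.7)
The plan is to exploit two independent asymptotic facts: the temporal limit $\lim_{t\to\infty}\phi(t)$ computed in the Remark following Lemma \ref{lemma:yellow}, and the spatial hypothesis \eqref{eq:dk} giving $v\to 0$ along the ray $k\hat b$. Because $\beta<0$ and $c>0$, the parameter $\mu=\mu_2=\beta c\sqrt{\tfrac{2(1-\alpha)}{c(-cn-8\beta(1-\alpha))}}$ is negative, so the Remark gives
\begin{equation*}
\lim_{t\to\infty}\phi(t)=\frac{-\mu}{\nu+\omega},
\end{equation*}
and therefore $M''=4(1-\alpha)\bigl(c-\lim_{t\to\infty}\phi(t)\bigr)$ is precisely the limit of $4(1-\alpha)(c-\phi(t))$ as $t\to\infty$. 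Thus, at least formally, $M'\to M''$ as $t\to\infty$ and $v(z)\to 0$.

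First I would fix a tolerance. Given $\eps_3>0$, choose $T$ so large that for every $t\geq T$,
\begin{equation*}
4(1-\alpha)\bigl|\phi(t)-\tfrac{-\mu}{\nu+\omega}\bigr|<\tfrac{\eps_3}{6};
\end{equation*}
this is possible by the explicit limit above. Next, using \eqref{eq:dk}, pick $k_0>0$ large enough so that $v(k_0\hat b)$ satisfies
\begin{equation*}
4(1-\alpha)\,|\beta+c|\,|v(k_0\hat b)|<\tfrac{\eps_3}{6},
\end{equation*}
which is possible because $v(k\hat b)\to 0$ as $k\to\infty$ (when $\beta+c=0$ the term simply vanishes and any $k_0$ works).

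Then I would make the choice of spacetime point explicit. Set $t_3:=T$ and $x_3:=k_0\hat b-\eta t_3\hat a$, so that by construction $z_3=x_3+\eta t_3\hat a=k_0\hat b$; this is the only place where it matters that we can slide $x_3$ freely to place $z_3$ on the prescribed ray. Plugging into the definition of $M'$ from Lemma \ref{lemma:silver},
\begin{align*}
M'(x_3,t_3)&=4(1-\alpha)\bigl[(c-\phi(t_3))-(\beta+c)v(z_3)\bigr]\\
&=M''+4(1-\alpha)\bigl[\tfrac{-\mu}{\nu+\omega}-\phi(t_3)\bigr]-4(1-\alpha)(\beta+c)v(z_3)\\
&>M''-\tfrac{\eps_3}{6}-\tfrac{\eps_3}{6}>M''-\tfrac{\eps_3}{3},
\end{align*}
which is exactly the claim. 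The argument is essentially a bookkeeping exercise once the sign of $\mu$ and the limit of $\phi$ are pinned down; the only substantive step is recognizing that $z_3$ and $t_3$ can be prescribed independently of one another through the choice of $x_3$, so there is no obstruction to taking $t_3$ large while simultaneously keeping $v(z_3)$ small.
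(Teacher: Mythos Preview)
Your proof is correct and follows essentially the same approach as the paper's: pick $t_3$ large so that $\phi(t_3)$ is close to its limit $-\mu/(\nu+\omega)$, then translate in space by $-\eta t_3\hat a$ so that $z_3$ lands on the ray where $v\to 0$. You add two welcome clarifications the paper leaves implicit---the sign of $\mu$ (which is what selects the correct branch of the limit in the Remark after Lemma~\ref{lemma:yellow}) and the trivial case $\beta+c=0$---but the argument is otherwise identical.
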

\begin{proof}
Fix $\eps_3 > 0$. Note that
\[ \lim_{t \rightarrow \infty} \phi(t) = \dfrac{ - \mu}{\nu + \omega}. \]
Choosing $t \geq t_3$ large enough gives 
\[ \left | \phi(t_3) - \dfrac{ - \mu }{ \nu+ \omega} \right | < \dfrac{ \eps_3}{24(1-\alpha)},
\]
so that
\[ 4 ( 1 -\alpha)( c - \phi ) > 4 ( 1 -\alpha)\left ( c - \dfrac{ - \mu}{\nu+ \omega} \right ) - \dfrac{ \eps_3}6. \]

Having fixed $t_3$, we then set $x_3 := - \eta t_3 \hat a + \lambda \hat b$ with $\lambda$ sufficiently large. Then by \eqref{eq:dk} it follows that
\[ \left | v -  0 \phantom{\dfrac 12} \! \! \right | < \dfrac{\eps_3}{24 (  1 - \alpha ) } \dfrac 1 { |\beta  + c |}, \]
\[-  4 ( 1 - \alpha)( \beta + c ) v > 0 - \dfrac { \eps_3}6. \]
Therefore
\[ M' = 4  (1 - \alpha) [ ( c - \phi) - ( \beta + c )\phi] > M'' - \dfrac{\eps_3}3.\]
\end{proof}
\begin{remark}
Note that \eqref{eq:dk} can be weakened; it suffices to have $\displaystyle \lim_{z \rightarrow \infty} v(z) = 0$ along some path that goes to infinity.
\end{remark}

\begin{lemma}
\label{lemma:gold}
If $n \leq 3$, and $\beta = - \dfrac {cn ( 1+ \alpha)} { 4 \alpha^2 - 4 \alpha + 2n}$, and $0 < \alpha < \alpha_0(\eps_3) $ is sufficiently close to 0, then conditions (i), (ii), and (iii) are satisfied, and 
\begin{equation*}
M'' > M''' - \dfrac{\eps_3}3,
\end{equation*}
where
\begin{equation*}
M''' := M'''(n) = 2c \left ( \dfrac{ n- 4 + 2\sqrt{4n-n^2} }{ n - 2 + \sqrt{4n-n^2} } \right ).
\end{equation*}
\end{lemma}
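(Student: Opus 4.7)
The approach has three stages: verify hypotheses (i)--(iii) of Theorem \ref{theorem:white} for the prescribed $\beta$ with $\alpha > 0$ small, compute the limit of $M''$ as $\alpha \to 0^+$ and identify it as $M'''$, and then invoke continuity in $\alpha$.

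For the first stage, condition (i) is immediate. Condition (ii) is saturated by the prescribed $\beta$ (equality rather than strict inequality), which one handles either by admitting equality (as is done in Lemma \ref{lemma:blue}, the tool used to bound $P_2$) or by taking $\beta$ slightly more negative and passing to the limit. For (iii), as $\alpha \to 0$ we have $\beta \to -c/2$; the open interval $\bigl(-cn(2+\sqrt 2)/4,\, -cn(2-\sqrt 2)/4\bigr)$ contains $-c/2$ precisely when $n < 2+\sqrt 2$, so the hypothesis $n \leq 3$ is exactly what is required, and continuity of $\beta$ in $\alpha$ extends (iii) to a right neighborhood of $0$.

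The second stage is a direct calculation. Substituting $\alpha=0$ and $\beta=-c/2$, the discriminant $-cn - 8\beta(1-\alpha)$ becomes $c(4-n) > 0$ thanks to $n\leq 3$ (this is the other place where the dimension restriction enters), so the square roots defining $\mu_2$ and $\nu_2$ are well-defined in the limit. One obtains
\[
-\mu_2 \to \frac{c}{\sqrt{2(4-n)}}, \quad \nu_2 \to \frac{(n-2)\sqrt{2}}{n\sqrt{4-n}}, \quad \omega_2 \to \sqrt{2/n}.
\]
Writing $s := \sqrt{4n-n^2}$ and using the identity $n\sqrt{4-n} = \sqrt{n}\,s$, the sum collapses to $\nu_2 + \omega_2 \to \sqrt 2\,(n-2+s)/(\sqrt{n}\,s)$, whence
\[
\frac{-\mu_2}{\nu_2 + \omega_2} \to \frac{cn}{2(n-2+s)}, \qquad \lim_{\alpha \to 0^+} M'' = 4\!\left(c - \frac{cn}{2(n-2+s)}\right) = \frac{2c(n-4+2s)}{n-2+s} = M'''.
\]

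Since $\mu_2, \nu_2, \omega_2$ depend continuously on $\alpha$ at $0$, so does $M''$, and for any $\eps_3 > 0$ one chooses $\alpha_0(\eps_3) > 0$ so that $|M''(\alpha) - M'''| < \eps_3/3$ whenever $0 < \alpha < \alpha_0(\eps_3)$, yielding the claimed bound. The only nontrivial obstacle is the algebraic simplification identifying the limit with $M'''$: the key step is recognizing the factorization $n\sqrt{4-n} = \sqrt n \cdot s$, which collapses an apparently messy ratio of nested square roots into the clean rational expression above.
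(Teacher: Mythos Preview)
Your proof is correct and follows essentially the same approach as the paper: verify (i)--(iii) by evaluating at $\alpha=0$ and invoking continuity, compute $M''$ at $\alpha=0$ directly to identify it with $M'''$, and then appeal to continuity in $\alpha$ to obtain the $\eps_3/3$ bound. The only differences are presentational---you work directly with $\mu_2,\nu_2,\omega_2$ and the substitution $s=\sqrt{4n-n^2}$, whereas the paper first simplifies $M''$ to the form $4(1-\alpha)c\bigl(1+\tfrac{\beta/c}{1+4\beta(1-\alpha)/(cn)+\sqrt{-1-8\beta(1-\alpha)/(cn)}}\bigr)$ before substituting---and you are more careful than the paper in flagging that the prescribed $\beta$ saturates (ii) with equality rather than strict inequality (the paper simply asserts (ii) holds ``by construction''); your proposed resolutions are both valid, since Lemma~\ref{lemma:blue} only needs the non-strict version.
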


\begin{proof}
(i) and (ii) are clearly satisfied by construction. And note that (iii) is equivalent to
\[ - \dfrac{ 2  +\sqrt 2 } 4 < \dfrac{ \beta ( 1 - \alpha)} {cn } < - \dfrac{ 2 - \sqrt 2 }4. \]
But the quantity in the middle varies continuously with $\alpha$ near $\alpha = 0$, so it suffices to check it at $\alpha=0$, where we indeed have
\[ - \dfrac{ 2 + \sqrt 2} 4 < - \dfrac{ 1}{2n} < - \dfrac{ 2 - \sqrt 2 }4 , \]
which holds for all $n \leq 3$, so there must exist some $\alpha_0$ sufficiently small such that (iii) holds for all $\alpha < \alpha_0$.

Next, we compute $M''$:

\begin{gather*}
M''  = 4  (1 -\alpha) \left ( c - \dfrac{ - \mu}{\nu + \omega} \right ) \\
= 4 ( 1 - \alpha ) \left ( c + \dfrac{ \dfrac{ \beta c }{ 2 \sqrt A}}{\dfrac 1 { 2 \sqrt A}  \left ( \dfrac{ 4 \beta ( 1 -\alpha)} n + c \right ) + \sqrt{ \dfrac{ 2 ( 1 -\alpha)} n } } \right ) \\
= 4 ( 1 -\alpha ) \left ( c + \dfrac{ \beta c } { 	\left (  c + \dfrac{ 4 \beta  (1 -\alpha)} n \right ) + \sqrt{ \dfrac{ 8 A ( 1 - \alpha)} n } } \right ).
\end{gather*}
Here $A = A(\eps' = 0 )$, so that
\[ \dfrac{ 8 A ( 1 -\alpha)} n =  \dfrac{ 16 \beta^2  ( 1- \alpha)^2 }{n^2 } - \left (c+ \dfrac {4 \beta ( 1- \alpha)} n \right)^2 = c^2 \left ( - 1 - \dfrac{ 8 \beta ( 1 - \alpha)} {cn} \right ). \] 
This gives
\begin{gather*}
M'' = 4  (1 -\alpha) c \left ( 1 + \dfrac{ \beta /c  }{ 1 + \dfrac{ 4 \beta ( 1 -\alpha)} {cn} + \sqrt{ -1  - \dfrac{ 8 \beta ( 1 -\alpha)} {cn} } } \right ).
\end{gather*}
Again, this involves only $(1-\alpha)$ and $\beta$, both of which are continuous at $\alpha = 0$, where we have $\beta = -c/2$, so
\[ M'' = 4 c \left ( 1 + \dfrac{ - 1/2 }{ 1 - \dfrac 2n + \sqrt{ - 1 + \dfrac 4n }} \right ) = 2 c \left ( 2 - \dfrac n { n - 2 + \sqrt{4n - n^2}} \right ) = M'''. \]

Hence for $\alpha$ sufficiently close to $0$ we can get $|M'' - M'''| < \eps_3/3$, which gives us the desired conclusion.
\end{proof}
\begin{proof}[Proof of Theorem]
Fix a solution $f(x,t)  =v(x + \eta t \hat a) $ of \eqref{eq:fisher} which also satisfies \eqref{eq:dk}, and fix a $\eps_3 > 0$.

Let $\alpha < \alpha_0$ and $\beta = - \dfrac{ c}{ 2 ( 1 - \alpha)}$, so that (i), (ii), (iii) are satisfied (by Lemma \ref{lemma:gold}). Applying Lemma \ref{lemma:silver} then gives that $\eta^2 \geq M$ for all $x,t$.

Applying Lemma \ref{lemma:crystal}, we find a pair $(x_3, t_3)$ such that $M' > M'' - \eps_3/3$. Then applying Lemma \ref{lemma:gold} again, we have that $M'' > M'''- \eps_3/3$ so that
\[ \eta^2 > M''' - \eps_3. \]
However, note that $M'''$ depends only on $n$. Hence we send $\eps_3 \rightarrow 0$, to get that
\[ \eta^2 \geq M'''(n) = \begin{cases} c ( 3 - \sqrt 3), & n = 1, \\
2c ,  & n = 2, \\
c ( 7 - 3 \sqrt 3), & n = 3,
\end{cases} \]
as desired.
\end{proof}

\subsection{Classical Harnack Inequality} 

In this subsection, we integrate our differential Harnack estimates along a space-time curve to derive classical Harnack inequalities. We further assume that $M$ is closed, and that $f(x,t) < 1$ for all $x,t$.

\begin{theorem}
\label{theorem:ruby} Let $M$ be a closed Riemannian manifold with non-negative Ricci curvature, and $0 < f < 1$ be a bounded positive solution to Fisher's equation. Let $\alpha$ and $\beta$ satisfy the conditions of Theorem \ref{theorem:black}. Furthermore, if $\alpha \leq n /4$, then there will always exist $\beta$ such that $\beta + c \geq 0$ in addition to the constraints of Theorem \ref{theorem:black}. For such an $\alpha$ and $ \beta$, 

\begin{itemize}
\item[(i)]  if  $8 \beta (1- \alpha)+ cn < 0$, then we have

\[   \frac{ f(x_2, t_2 )}{f(x_1, t_1) } \geq  \left( \frac{1 - e^{-ct_2}}{1-e^{-ct_1}} \right)^{\frac{8 \beta^2 (1-\alpha)}{c^2 n + 8 \beta c (1-\alpha)}}  \exp\left( - \dfrac { d(x_1, x_2)^2 } { 4 ( 1 - \alpha) (t_2  -t_1)} \right); \]

\item[(ii)]  if  $8 \beta (1-\alpha) + cn > 0$, $t_2>t_1>T_2$, then we have 

\[  \frac{ f(x_2, t_2 )}{f(x_1, t_1) } \geq   \left[ \frac{\left(1 + \frac{8\beta(1-\alpha)}{cn}\right)e^{-c(t_2-T_2)}+1}{\left(1 + \frac{8\beta(1-\alpha)}{cn}\right)e^{-c(t_1-T_2)}+1} \right]^{\frac{8 \beta^2 (1-\alpha)}{c(cn + 8 \beta (1-\alpha))}} \exp\left( - \dfrac { d(x_1, x_2)^2 } { 4 ( 1 - \alpha) (t_2  -t_1)} \right); \]

\item[(iii)]  if  $8 \beta (1-\alpha) + cn = 0$, $t_2>t_1>T_2$, then we have 

\[   \frac{ f(x_2, t_2 )}{f(x_1, t_1) } \geq   \exp\left[ - \dfrac { \beta } { c} \left(e^{-c(t_2-T_2)}- e^{-c(t_1-T_2)}\right) \right] \exp\left( - \dfrac { d(x_1, x_2)^2 } { 4 ( 1 - \alpha) (t_2  -t_1)} \right). \]

\end{itemize}

\end{theorem}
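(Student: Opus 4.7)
The plan is to integrate the differential Harnack estimate of Theorem \ref{theorem:black} along a space-time curve, following the classical Li-Yau scheme. The first step is to convert the inequality $\Delta u + \alpha|\nabla u|^2 + \beta e^u + \phi_0(t) \geq 0$ into a lower bound on $u_t$. Substituting $\Delta u = u_t - |\nabla u|^2 - c(1 - e^u)$ from Fisher's equation gives
\[u_t \geq (1-\alpha)|\nabla u|^2 - (\beta + c) e^u + c - \phi_0(t).\]
Because the hypothesis $\alpha \leq n/4$ allows us to pick $\beta$ with $\beta + c \geq 0$, and the assumption $f < 1$ gives $e^u < 1$, the nonlinear term can be discarded via $-(\beta+c)e^u \geq -(\beta+c)$, yielding the clean estimate
\[u_t \geq (1-\alpha)|\nabla u|^2 - \beta - \phi_0(t).\]

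Next, for two space-time points with $t_2 > t_1$, let $\gamma:[t_1, t_2] \to M$ be a constant-speed minimizing geodesic from $x_1$ to $x_2$. Writing
\[u(x_2, t_2) - u(x_1, t_1) = \int_{t_1}^{t_2}\bigl(u_t + \nabla u \cdot \dot\gamma\bigr)\,ds,\]
inserting the bound above, and completing the square via $(1-\alpha)|\nabla u|^2 + \nabla u \cdot \dot\gamma \geq -|\dot\gamma|^2/(4(1-\alpha))$, I obtain
\[u(x_2, t_2) - u(x_1, t_1) \geq -\dfrac{d(x_1, x_2)^2}{4(1-\alpha)(t_2 - t_1)} + \int_{t_1}^{t_2}\bigl(-\beta - \phi_0(s)\bigr)\, ds,\]
after using $\int_{t_1}^{t_2}|\dot\gamma|^2\,ds = d(x_1,x_2)^2/(t_2 - t_1)$. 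Exponentiation immediately produces the Gaussian spatial factor appearing in all three cases; what remains is to compute the time integral for each form of $\phi_0$.

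For case (i), a short manipulation shows $-\beta - \phi_0(s) = \dfrac{8\beta^2(1-\alpha)}{cn + 8\beta(1-\alpha)} \cdot \dfrac{e^{-cs}}{1 - e^{-cs}}$, and the substitution $v = 1 - e^{-cs}$ turns the integral into a constant multiple of $\log\frac{1-e^{-ct_2}}{1-e^{-ct_1}}$, delivering the stated power. For case (ii), the restriction $t_2 > t_1 > T_2$ keeps both endpoints on the second branch of $\phi_0$ from condition (iv) of Theorem \ref{theorem:black}; with $D := c + 8\beta(1-\alpha)/n$, the substitution $w = e^{c(s-T_2)}$ combined with a partial-fractions decomposition of $1/(w(D + cw))$ produces the logarithm of the rational expression in the statement. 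For case (iii), $D = 0$ collapses $\phi_0$ to $-\beta\bigl(1 + e^{-c(s-T_2)}\bigr)$, so $-\beta - \phi_0 = \beta e^{-c(s-T_2)}$ integrates elementarily to the exponential prefactor shown.

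The main obstacle is purely bookkeeping: no single step is conceptually difficult, but the three regimes are structurally similar and signs are subtle (the exponent in case (i), for instance, is negative precisely because $cn + 8\beta(1-\alpha) < 0$ there, which still yields a valid lower bound since the base exceeds $1$), so one must track carefully how the signs conspire to reproduce the claimed expressions.
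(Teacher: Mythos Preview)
Your proposal is correct and follows essentially the same route as the paper: rewrite the differential Harnack as a lower bound on $u_t$, use $\beta+c\ge 0$ together with $e^u<1$ to absorb the nonlinear term, integrate along a minimizing geodesic with the usual completion of the square, and then evaluate $\int(-\beta-\phi_0)$ in each of the three regimes. The only point you state without justification is that $\alpha\le n/4$ guarantees the existence of $\beta$ satisfying both $\beta+c\ge 0$ and condition (ii) of Theorem \ref{theorem:black}; the paper checks this explicitly by showing the two constraints are compatible precisely when $-(4\alpha^2-4\alpha+2n)\le -n(1+\alpha)$, which simplifies to $\alpha\le n/4$.
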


\begin{proof}[Proof of Theorem \ref{theorem:ruby}]
Let $f(x,t)$ solve $f_t = \Delta f + c f ( 1- f)$, and $u = \log f$. Fix points $(x_1, t_1), (x_2, t_2)$ and let $\gamma: [t_1, t_2] \rightarrow M^n$ be an arbitrary space-time path connecting them, i.e. $\gamma(t_1) = x_1, \gamma(t_2) = x_2$.

Let $v(t) := u(\gamma(t), t)$ be the value of $u$ along $\gamma$. We compute
\[ v'(t) = u_t + \nabla u \cdot \dfrac { d \gamma } {dt}. \] Using the time evolution for $u_t = (\log f)_t = \dfrac{f_t}f$, this is equal to
\[ v'(t) = \Delta u  + | \nabla u|^2 + c ( 1 -e^u) + \nabla u \cdot \dfrac{ d \gamma}  {dt}. \]
Applying the Harnack inequality gives
\begin{align*}
v'(t) & \geq ( 1-  \alpha) |\nabla u|^2 + (c-\phi) - ( \beta + c ) e^u  +\nabla u \cdot \dfrac{ d \gamma } {dt}.
\end{align*} 
By assumption, $f < 1$ and $\beta + c \geq 0$. This implies
\[ - (\beta + c ) e^u \geq -(\beta + c), \]
so defining $\widetilde{\phi}(t) = - \beta - \phi(t)$, we then get
\begin{align*}
v'(t) & \geq ( 1-  \alpha) |\nabla u|^2 + (c-\phi) - ( \beta + c )  +\nabla u \cdot \dfrac{ d \gamma } {dt} \\
 & = -\beta - \phi + ( 1-  \alpha) |\nabla u|^2  +\nabla u \cdot \dfrac{ d \gamma } {dt} \\
 & = \widetilde{\phi}(t) + ( 1-  \alpha) |\nabla u|^2  +\nabla u \cdot \dfrac{ d \gamma } {dt}, \\
 v'(t) & \geq    \widetilde{\phi}(t) - \dfrac 1 {  4 (1-\alpha)} \left | \dfrac {d \gamma} {dt} \right |^2. 
\end{align*}
Integrating in time, we get
\[ u(x_2, t_2) - u(x_1, t_1) =  v(t_2) - v(t_1) = \int_{t_1}^{t_2} v'(t) dt  \geq \int_{t_1}^{t_2} \widetilde{\phi}(t) dt - \dfrac 1  {4  ( 1 -\alpha)} \int_{t_1}^{t_2} \left | \dfrac{ d \gamma}{dt} \right |^2 dt. \]
Since $\gamma$ was chosen to be an arbitrary path, we can choose it to be the path minimizing $\int | \gamma' |^2$, which is the minimizing geodesic between the two endpoints. The integral thus becomes
\[ \int_{t_1}^{t_2} | \gamma' |^2 dt =  \dfrac { d(x_1, x_2)^2 } {t_2 - t_1}. \]
Thus the space-time Harnack is given by
\[ \log \left ( \dfrac { f(x_2, t_2 )}{f(x_1, t_1)} \right )  = u(x_2, t_2) - u(x_1, t_1) \geq \int_{t_1}^{t_2}  \widetilde{\phi}(t) dt - \dfrac { d(x_1, x_2)^2 } { 4 ( 1 - \alpha) (t_2  -t_1)}. \]

We compute the definite integral, dividing into three cases. 
First we deal with the case $8 \beta (1- \alpha)+ cn < 0$. In this case we have that 
\[\phi(t) = \frac{\left( \frac{\beta c n}{c n + 8 \beta (1 - \alpha)} \right) e^{-c t} - \beta}{1 - e^{-c t}}, \]
and
\[ \widetilde{\phi}(t) = \left ( \beta e^{-ct} - \dfrac{ \beta c n e^{-ct} }{cn + 8 \beta ( 1 -\alpha)} \right ) \dfrac{ 1 }{1 -  e^{-ct}} = \beta \cdot \dfrac {8 \beta ( 1- \alpha)}{cn + 8 \beta ( 1 - \alpha)} \cdot \dfrac {e^{-ct}}{1 - e^{-ct}}. \]
Then we can explicitly integrate 
\begin{align*}
\int_{t_1}^{t_2} \widetilde{\phi}(t) \; dt & = \dfrac{\beta}c   \left( \frac{8 \beta (1-\alpha)}{c n + 8 \beta (1-\alpha)} \right)\log\left[ \frac{1 - e^{-ct_2}}{1-e^{-ct_1}} \right].
\end{align*}
Therefore we get that 
\[\exp \left( \int_{t_1}^{t_2} \widetilde{\phi}(t) \; dt \right) = \left( \frac{1 - e^{-ct_2}}{1-e^{-ct_1}} \right)^{\frac{8 \beta^2 (1-\alpha)}{c^2 n + 8 \beta c (1-\alpha)}}\] and the claim follows.

Second, we deal with the case $8 \beta (1-\alpha) + cn > 0$. Then for $t > T_2$ (recall that $T_2$ is a constant) we have that
\[ \phi(t) = \dfrac{ - \beta c n e^{c (t-T_2)} -\beta c n }{ c n+  8 \beta ( 1- \alpha) + c n e^{ c (t-T_2)} } ,\]
and so
\begin{align*}
\widetilde{\phi}(t) = - \beta - \phi(t) = \frac{- 8 \beta^2 (1-\alpha) e^{-c(t-T_2)}}{(8 \beta (1- \alpha) + cn) e^{-c(t-T_2)} + cn} .
\end{align*}
If we let $B =-8\beta^2(1-\alpha)$ and $D = cn + 8\beta(1-\alpha)$, then we get that
\[ \widetilde{\phi}(t) = \frac{B e^{-c(t-T_2)}}{De^{-c(t-T_2)}+cn}.\]
We can integrate
\begin{align*}
	\int_{t_1}^{t_2} \widetilde{\phi}(t)\; dt  =  \left( \frac{8 \beta^2 (1-\alpha)}{c^2 n + 8 \beta c (1-\alpha)} \right)\log \left( \frac{(8\beta (1-\alpha) + cn )e^{-c(t_2-T_2)}+cn}{(8 \beta (1- \alpha) + cn) e^{-c(t_1-T_2)}+cn} \right).
\end{align*}
Therefore
\[\exp \left( \int_{t_1}^{t_2} \widetilde{\phi}(t) \; dt \right) = \left[ \frac{\left(1 + \frac{8\beta(1-\alpha)}{cn}\right)e^{-c(t_2-T_2)}+1}{\left(1 + \frac{8\beta(1-\alpha)}{cn}\right)e^{-c(t_1-T_2)}+1} \right]^{\frac{8 \beta^2 (1-\alpha)}{c^2 n + 8 \beta c (1-\alpha)}}\] as claimed in the statement of Theorem \ref{theorem:ruby}. 

In the last case that $8 \beta (1- \alpha)+ cn = 0$, we have that
\[ \phi(t) = \dfrac{ - \beta e^{c (t-T_2)} -\beta }{  e^{ c (t-T_2)} } ,\]
and so
\begin{align*}
\widetilde{\phi}(t) = - \beta - \phi(t) = \frac{ \beta}{ e^{c(t-T_2)} } .
\end{align*}
Therefore
\[\exp \left( \int_{t_1}^{t_2} \widetilde{\phi}(t) \; dt \right) = \exp\left[ -\dfrac { \beta } { c} \left(e^{-c(t_2-T_2)}- e^{-c(t_1-T_2)}\right) \right]\] as desired.

To finish the proof of our theorem we need to show that we can choose $\beta + c \geq  0$, i.e. $\beta \geq  -c$. We have the constraint (ii): 
\[ \beta \leq \dfrac{ - c n ( 1 + \alpha)}{4 \alpha^2 - 4 \alpha + 2 n}, \]
 so we need to have
\[ -c \leq \beta \leq \dfrac{ - cn ( 1 + \alpha)}{4 \alpha^2 - 4 \alpha + 2n}. \]
Note that since $0 < \alpha < 1$, we have $4 \alpha^2 - 4 \alpha + 2n \geq -1  +2n \geq 1$; thus it remains to choose $\alpha$ so that
\[ -(4 \alpha^2 - 4\alpha + 2n) \leq - n ( 1 + \alpha), \]
which simples to 
\[ \alpha \leq n/4. \]
This is  automatically true if $n \geq 4$, which means we can choose an $\alpha$ we wish, and then there will be at least one $\beta$ that satisfies all the constraints including $\beta + c \geq 0$.
\end{proof}
\vspace{12pt}

\begin{remark}
Note that $\displaystyle \lim_{t \to \infty} \phi(t) = - \beta$, and $\displaystyle \lim_{t \rightarrow \infty} \widetilde{\phi}(t) = 0$. Thus, as $t_1, t_2 \to \infty$, the estimate approaches the classical Li-Yau Harnack \cite{ly86}.
\end{remark}

\begin{remark}
In the compact case we obtain a good bound as $t_1$ and $t_2$ get large. In the complete noncompact case, one can still integrate along space-time curves to obtain an inequality, but the estimate degenerates when time becomes large. 
\end{remark}

\bibliographystyle{plain}
\bibliography{bio}

\def\cprime{$'$}
\begin{thebibliography}{10}

\bibitem{ab79}
Donald~G. Aronson and Philippe B{\'e}nilan.
\newblock R\'egularit\'e des solutions de l'\'equation des milieux poreux dans
  {${\bf R}^{N}$}.
\newblock {\em C. R. Acad. Sci. Paris S\'er. A-B}, 288(2):A103--A105, 1979.

\bibitem{c08}
Xiaodong Cao.
\newblock Differential {H}arnack estimates for backward heat equations with
  potentials under the {R}icci flow.
\newblock {\em J. Funct. Anal.}, 255(4):1024--1038, 2008.

\bibitem{cck14}
Xiaodong Cao, Mark Cerenzia, and Demetre Kazaras.
\newblock Harnack estimate for the {E}ndangered {S}pecies {E}quation.
\newblock {\em Proc. Amer. Math. Soc.}, 143(10):4537--4545, 2015.

\bibitem{ch09}
Xiaodong Cao and Richard~S. Hamilton.
\newblock Differential {H}arnack estimates for time-dependent heat equations
  with potentials.
\newblock {\em Geom. Funct. Anal.}, 19(4):989--1000, 2009.

\bibitem{fisher37}
R.~A. Fisher.
\newblock The wave of advance of advantageous genes.
\newblock {\em Annals of Eugenics}, 7(4):355--369, 1937.

\bibitem{fujita66}
Hiroshi Fujita.
\newblock On the blowing up of solutions of the {C}auchy problem for
  {$u_{t}=\Delta u+u^{1+\alpha }$}.
\newblock {\em J. Fac. Sci. Univ. Tokyo Sect. I}, 13:109--124 (1966), 1966.

\bibitem{hamilton93}
Richard~S. Hamilton.
\newblock The {H}arnack estimate for the {R}icci flow.
\newblock {\em J. Differential Geom.}, 37(1):225--243, 1993.

\bibitem{hamilton11}
Richard~S. Hamilton.
\newblock Li-{Y}au estimates and their {H}arnack inequalities.
\newblock In {\em Geometry and analysis. {N}o. 1}, volume~17 of {\em Adv. Lect.
  Math. (ALM)}, pages 329--362. Int. Press, Somerville, MA, 2011.

\bibitem{kpp37}
A.~N. Kolmogorov, I.~G. Petrovskii, and N.~S. Piskunov.
\newblock Etude de l'\'equation de la diffusion avec croissance de la
  quantit\'e de mati\'ere et son application \'a un probl\'eme biologique.
\newblock {\em Bulletin Universit\'e d'Etat \'a Moscou}, pages 1--26, S\'erie
  internationale, section A 1, 1937.

\bibitem{ly86}
Peter Li and Shing~Tung Yau.
\newblock On the parabolic kernel of the {S}chr\"odinger operator.
\newblock {\em Acta Math.}, 156(3-4):153--201, 1986.

\bibitem{mckean75}
H.~P. McKean.
\newblock Application of {B}rownian motion to the equation of
  {K}olmogorov-{P}etrovskii-{P}iskunov.
\newblock {\em Comm. Pure Appl. Math.}, 28(3):323--331, 1975.

\bibitem{naraigh13}
Lennon~\'O N\'araigh and Khalid Kamhawi.
\newblock Multiscale methods and modelling for chemical reactions on
  oscillating surfaces.
\newblock {\em IMA Journal of Applied Mathematics}, 78(3):537--565, 2013.

\bibitem{perelman1}
Grigori Perelman.
\newblock The entropy formula for the {R}icci flow and its geometric
  applications.
\newblock {\em preprint}, 2002.

\bibitem{sherratt98}
Jonathan~A. Sherratt.
\newblock On the transition from initial data to travelling waves in the
  {F}isher-{KPP} equation.
\newblock {\em Dynam. Stability Systems}, 13(2):167--174, 1998.

\bibitem{tuckwell88}
Henry~C. Tuckwell.
\newblock {\em {Introduction to Theoretical Neurobiology: Volume 2, Nonlinear
  and Stochastic Theories}}.
\newblock Cambrindge University Press, Cambridge, UK, 1988.

\end{thebibliography}
\end{document}